\newtheorem*{conj*}{Conjecture}
\newtheorem{theorem}{Theorem}[section]
\theoremstyle{definition}
\newtheorem*{remark}{Remark}
\newtheorem*{tworemarks}{Two Remarks}
\theoremstyle{plain}
\newtheorem{prop}[theorem]{Proposition}
\newtheorem{corollary}[theorem]{Corollary}
\newcommand{\preg}{p_{\mathrm{reg}}}
\newcommand{\Q}{\mathbb{Q}}
\newcommand{\R}{\mathbb{R}}
\newcommand{\N}{\mathbb{N}}
\newcommand{\C}{\mathbb{C}}
\renewcommand{\pmod}[1]{\,\,({\rm mod}\,\,{#1})}
\numberwithin{equation}{section}
\newtheoremstyle{example}
  {\topsep}   
  {\topsep}   
  {\normalfont}  
  {0pt}       
  {\bfseries} 
  {.}         
  {5pt plus 1pt minus 1pt} 
  {}          
\theoremstyle{example}
\newtheorem*{example}{Example}
\def\({\left(}
\def\){\right)}
\begin{document}
\title[Limiting Betti distributions]{Limiting Betti distributions of Hilbert schemes on $n$ points}
\author{Michael Griffin, Ken Ono, Larry Rolen, and Wei-Lun Tsai}

\address{Department of Mathematics, Brigham Young University, Provo, UT 84602}
\email{mjgriffin@math.byu.edu}

\address{Department of Mathematics, University of Virginia, Charlottesville, VA 22904}
\email{ko5wk@virginia.edu}

\address{Department of Mathematics, Vanderbilt University, Nashville, TN 37240}
\email{larry.rolen@vanderbilt.edu}

\address{Department of Mathematics, University of Virginia, Charlottesville, VA 22904}
\email{wt8zj@virginia.edu}

\keywords{Betti numbers, Hilbert schemes, partitions}
\subjclass[2020]{14C05, 14F99, 11P82}

\begin{abstract} 
 Hausel and Rodriguez-Villegas \cite{HRV} recently observed that work of G\"ottsche, combined with a classical result of Erd\H{o}s and Lehner
on integer partitions, implies that the
  limiting Betti distribution for 
the Hilbert schemes $(\C^2)^{[n]}$ on $n$ points, as $n\rightarrow +\infty,$ is a {\it Gumbel distribution}. In view of this example, they ask for further
such Betti distributions. We answer this question for
  the quasihomogeneous Hilbert schemes $((\C^2)^{[n]})^{T_{\alpha,\beta}}$ that are cut out by torus actions.
 We prove that their limiting distributions are also of Gumbel type. 
 To obtain this result, we combine work of Buryak, Feigin, and Nakajima on these Hilbert schemes with our
 generalization of the result of Erd\H{o}s and Lehner, which gives the distribution of
 the number of parts in partitions that are multiples of a fixed integer $A\geq 2.$
 Furthermore, if $p_k(A;n)$ denotes the number of partitions of $n$ with exactly $k$ parts that are multiples of $A$, then we obtain the asymptotic
  $$
 p_k(A,n)\sim \frac{24^{\frac k2-\frac14}(n-Ak)^{\frac k2-\frac34}}{\sqrt2\left(1-\frac1A\right)^{\frac k2-\frac14}k!A^{k+\frac12}(2\pi)^k}e^{2\pi\sqrt{\frac1{6}\left(1-\frac1A\right)(n-Ak)}},
 $$
 a result which is of independent interest.
\end{abstract}

\maketitle
\section{Introduction and statement of results}

We consider the Hilbert schemes of $n$ points on $\C^2,$ denoted
 $X^{[n]}=(\C^2)^{[n]},$ that have been
  studied by G\"ottsche \cite{Gottsche, GottscheICM}, and Buryak, Feigin, and Nakajima \cite{BuryakFeigin, IMRN}. 
Each $X^{[n]}$ is a nonsingular, irreducible, quasiprojective dimension $2n$ algebraic variety.
Moreover, they enjoy the convenient description
\begin{equation}
X^{[n]}=\left \{ I \subset \C[x,y] \ : \ {\text {\rm $I$ is an ideal with $\dim_{\C}(\C[x,y]/I)=n$}}\right\},
\end{equation}
which reduces the calculation of its Betti numbers to problems on integer partitions. 
To investigate these Betti numbers, it is natural to combine them to form
 the generating function
\begin{equation}
P\left(X^{[n]};T\right):=\sum_{j=0}^{2n-2} b_j(n) T^{j}=
\sum_{j=0}^{2n-2} \dim \left(H_j\left(X^{[n]},\Q\right)\right)T^j,
\end{equation}
known as its {\it Poincar\'e polynomial}.
Due to the connection with integer partitions,  it turns out that these polynomial generating functions equivalently keep track of the number of parts among the size $n$ partitions.

In their work on the statistical properties of certain varieties, Hausel and Rodriguez-Villegas \cite{HRV} observed that
a classical result of Erd\H{o}s and Lehner on partitions \cite{EL}
gives (see Section 4.3 of \cite{HRV})
 the limiting distribution for the Betti numbers of $X^{[n]}$ as $n\rightarrow +\infty$. Using
G\"ottsche's generating function \cite{Gottsche, GottscheICM} for the $P(X^{[n]};T),$  it is straightforward to compute examples that offer glimpses of this result.
For example, we find that
$$
P\left(X^{[50]};T\right)=1+T^2+2T^4+\dots+5427T^{88}+2611T^{90}+920T^{92}+208T^{94}+25T^{96}+T^{98}.
$$
The renormalized even degree\footnote{The coefficients $b_{2j+1}(n)$ for odd degree terms identically vanish.}  coefficients are plotted in Figure~\ref{figure1}. As $P\left(X^{[50]};1\right)=p(50),$ the number of partitions of $50,$ the plot consists of the points
$\left \{ \left(\frac{2m}{98}, \frac{b_{2m}(50)}{p(50)}\right) \ : \ 0\leq m\leq 49\right\}.$

 \smallskip
 \begin{center}
\includegraphics[height=65mm]{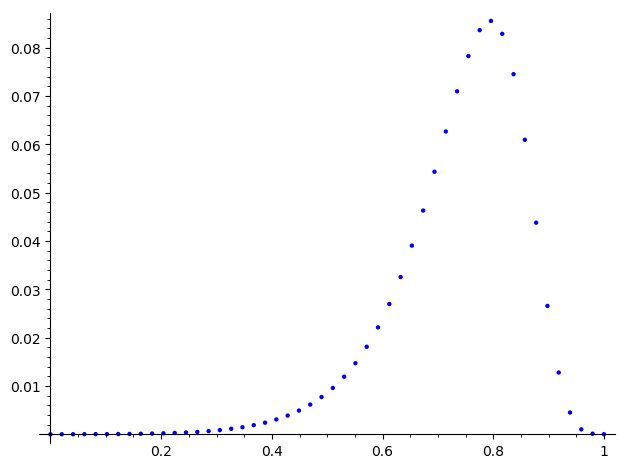}
\captionof{figure}{Betti distribution for $X^{[50]}$}\label{figure1}
\end{center}
\smallskip

\noindent
These distributions, when properly renormalized, converge to a {\it Gumbel distribution} as $n\rightarrow +\infty.$ 

Hausel and Rodriguez-Villegas asked for further such $n$-aspect Betti distributions.
We answer this question for the
quasihomogeneous $n$ point Hilbert schemes that are cut out by torus actions.  To define them, we use the torus  $(\C^{\times})^2$-action on $\C^2$ defined by scalar multiplication 
$$(t_1, t_2)\cdot (x,y):=(t_1x, t_2 y),
$$
which  lifts to $X^{[n]}=(\C^2)^{[n]}.$ For relatively prime $\alpha, \beta\in \N,$ we 
have the one-dimensional subtorus
$$T_{\alpha,\beta}:=\{(t^{\alpha}, t^{\beta}) \ : \ t\in \C^{\times}\}.
$$ 
The quasihomogeneous Hilbert scheme $X^{[n]}_{\alpha,\beta}:=((\C^2)^{[n]})^{T_{\alpha,\beta}}$ is the fixed point set
of  $X^{[n]}.$  

To define Betti distributions, we make use of the Poincar\'e polynomials
\begin{equation}\label{PoincareDefn}
P\left(X^{[n]}_{\alpha,\beta};T\right):=\sum_{j=0}^{2\lfloor \frac{n}{\alpha+\beta}\rfloor} b_j(\alpha,\beta; n) T^j=
\sum_{j=0}^{2\lfloor \frac{n}{\alpha+\beta}\rfloor} \dim \left(H_j\left(X^{[n]}_{\alpha,\beta},\Q\right)\right)T^j.
\end{equation}
As 
$P\left(X^{[n]}_{\alpha,\beta};1\right)=p(n),$ we have that the discrete measure
$d\mu^{[n]}_{\alpha,\beta}$
for $X^{[n]}_{\alpha,\beta}$ is
\begin{equation}\label{discretemeasureequation}
\Phi_n(\alpha, \beta; x):=\frac{1}{p(n)}\cdot \int_{-\infty}^x d\mu^{[n]}_{\alpha,\beta}=
\frac{1}{p(n)}\cdot \sum_{j\leq x} b_j(\alpha,\beta;n).
\end{equation}
The following theorem gives the limiting Betti distributions (as functions in $x$) we seek.

\begin{theorem}\label{MainTheorem}
If $\alpha$ and $\beta$ are relatively prime positive integers, then
$$
\lim_{n\rightarrow +\infty} \Phi_n(\alpha, \beta; 2\sqrt{n} x+\delta_n(\alpha,\beta))=\exp\left (
-\frac{\sqrt{6}}{\pi(\alpha+\beta)}\cdot  \exp\left( -\frac{\pi(\alpha+\beta)}{\sqrt{6}}x 
\right)\right),
$$
where  $\delta_n(\alpha,\beta):=\frac{\sqrt{6}}{\pi(\alpha+\beta)}\sqrt{n}\log(n).$
\end{theorem}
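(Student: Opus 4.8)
The plan is to convert the statement into a question about random integer partitions, and then to settle that question with a Hardy--Ramanujan circle method (saddle point) analysis of a suitably truncated partition generating function. Throughout set $A:=\alpha+\beta$ and $c:=\pi/\sqrt 6$. First I would invoke the work of Buryak, Feigin, and Nakajima on these fixed loci: their cell decompositions show that the odd Betti numbers of $X^{[n]}_{\alpha,\beta}$ vanish and that $b_{2k}(\alpha,\beta;n)=p_k(A;n)$, the number of partitions of $n$ with exactly $k$ parts divisible by $A$; equivalently
\[
\sum_{n\geq 0} P\(X^{[n]}_{\alpha,\beta};T\)\, q^n=\prod_{A\nmid i}\frac{1}{1-q^i}\cdot\prod_{A\mid i}\frac{1}{1-T^2 q^i},
\]
which is consistent with the degree range in \eqref{PoincareDefn}, since $\lfloor n/A\rfloor$ is the maximal number of parts of a partition of $n$ divisible by $A$. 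Writing $K_n:=\lfloor \sqrt n\,x+\tfrac12\delta_n(\alpha,\beta)\rfloor$ and splitting a partition of $n$ into its parts divisible by $A$ and those not (conjugating the former), one gets the exact identity
\begin{align*}
p(n)\cdot\Phi_n\!\bigl(\alpha,\beta;2\sqrt n\,x+\delta_n(\alpha,\beta)\bigr)
&=\#\{\lambda\vdash n:\ \lambda\text{ has at most }K_n\text{ parts divisible by }A\}\\
&=[q^n]\!\left(\prod_{i\geq 1}\frac{1}{1-q^i}\cdot\prod_{i>K_n}\bigl(1-q^{Ai}\bigr)\right),
\end{align*}
where $[q^n]$ denotes extraction of the coefficient of $q^n$. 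So the theorem reduces to showing that this last coefficient, divided by $p(n)$, tends to $\exp\!\bigl(-\tfrac1{cA}e^{-cAx}\bigr)=\exp\!\bigl(-\tfrac{\sqrt6}{\pi A}e^{-\pi Ax/\sqrt6}\bigr)$.

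Set $H(q):=\prod_{i\geq1}(1-q^i)^{-1}$, so $[q^n]H(q)=p(n)$, and $E_K(q):=\prod_{i>K}(1-q^{Ai})$; the task is to analyze $[q^n]\bigl(H\,E_{K_n}\bigr)$, which I would do by the standard circle method on the circle $|q|=e^{-t_0}$, $t_0:=\pi/\sqrt{6n}$ (the saddle for $p(n)$). The essential point is that $\delta_n(\alpha,\beta)=\tfrac{\sqrt6}{\pi A}\sqrt n\log n$ is tuned exactly so that $AK_nt_0=\tfrac12\log n+cAx+o(1)$; hence $\sum_{i>K_n}e^{-Ait_0}\sim e^{-AK_nt_0}/(At_0)\to \tfrac1{cA}e^{-cAx}$, and so $E_{K_n}(e^{-t_0})=\exp\!\bigl(-\sum_{i>K_n}e^{-Ait_0}+o(1)\bigr)$ converges to the claimed constant. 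On the central piece of the major arc --- of width $\asymp n^{-3/4}$, which carries the main term of $p(n)$ --- the factor $E_{K_n}$ is essentially constant, since there $\log E_{K_n}(e^{-t_0+i\theta})$ changes by $O\!\bigl(n^{-3/4}\sum_{i>K_n}Ai\,e^{-Ait_0}\bigr)=O(n^{-1/4}\log n)=o(1)$; thus it pulls out of the contour integral as $E_{K_n}(e^{-t_0})$, producing the main term $E_{K_n}(e^{-t_0})\,p(n)$. On the remainder of the contour one only needs $E_{K_n}$ not to spoil the classical estimates: $|E_K(q)|\leq\exp\!\bigl(\sum_{i\geq1}e^{-Ait_0}\bigr)=\exp(O(\sqrt n/A))$, and since $c+\tfrac{\sqrt6}{\pi A}<2c$ (which holds because $A\geq 2$) this is dominated by the decay of $|H(q)|$ on the minor arcs, so those contribute $o(p(n))$. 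Altogether $[q^n]\bigl(H\,E_{K_n}\bigr)=E_{K_n}(e^{-t_0})\,p(n)\,(1+o(1))$, and dividing by $p(n)$ gives the theorem.

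The hard part is the uniform control of the truncation factor $E_{K_n}$ along the contour. One must verify that, on the central major arc, the expansion $\log E_{K_n}(q)=-\sum_{i>K_n}q^{Ai}+O\!\bigl(\sum_{i>K_n}q^{2Ai}\bigr)$ is uniform in $\theta$, and that every error --- the $O(q^{2Ai})$ terms, the replacement of the $\theta$-dependent factor by its value at $\theta=0$, and the saddle-point error itself --- is $o(1)$ after division by $p(n)$; this is precisely where the exact shape of $\delta_n(\alpha,\beta)$ is forced, and it constitutes the promised generalization of Erd\H{o}s--Lehner. (The asymptotic for $p_k(A;n)$ advertised in the abstract is the local counterpart of this analysis: the same circle method applied to $[q^{\,n-Ak}]\bigl(\prod_{A\nmid i}(1-q^i)^{-1}\prod_{i=1}^{k}(1-q^{Ai})^{-1}\bigr)$, in which the finite product $\prod_{i=1}^{k}(1-q^{Ai})^{-1}$ is treated through its pole of order $k$ at $q=1$ rather than as a truncation tail.)
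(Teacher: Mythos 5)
Your reduction of the theorem to a statement about partitions is the same as the paper's (via the Buryak--Feigin generating function, which identifies $b_{2k}(\alpha,\beta;n)$ with $p_k(A;n)$ and hence $\Phi_n(\alpha,\beta;2\sqrt{n}x+\delta_n)$ with $p_{\leq K_n}(A;n)/p(n)$), but from that point on you take a genuinely different route. The paper proves the Erd\H{o}s--Lehner generalization combinatorially: it writes $p_{\leq k}(A;n)=\sum_j p_{\leq k}(j)\,\preg(A;n-Aj)$, feeds in the Erd\H{o}s--Lehner inclusion--exclusion for $p_{\leq k}(j)$ together with Hagis's asymptotic for $A$-regular partitions, and evaluates the resulting sum over $j$ as a Gaussian/Riemann sum centered at $j\approx n/A^2$. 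You instead attack the single generating function $H(q)\prod_{i>K_n}(1-q^{Ai})$ directly by the circle method, extracting the limit as the value $E_{K_n}(e^{-t_0})\to\exp(-\tfrac{1}{cA}e^{-cAx})$ of the truncation factor at the saddle. Your tuning computation $AK_nt_0=\tfrac12\log n+cAx+o(1)$ is correct and is exactly where $\delta_n(\alpha,\beta)$ is forced, and your major-arc estimate ($\log E_{K_n}$ varies by $O(n^{-1/4}\log n)$ across an arc of width $n^{-3/4}$) is right. Your approach is more self-contained (it does not need Hagis's theorem or the Erd\H{o}s--Lehner sums $S_k(m;j)$), at the cost of redoing the circle-method error analysis that the paper outsources to prior work.

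One concrete slip: your minor-arc bound $|E_{K}(q)|\leq\exp\bigl(\sum_{i\geq1}e^{-Ait_0}\bigr)=\exp(O(\sqrt n/A))$ is needlessly lossy, and the ensuing comparison ``$c+\tfrac{\sqrt6}{\pi A}<2c$'' does not suffice on the transition range $n^{-3/4}\ll|\theta|\ll 1$, where $|H(q)|$ is still as large as $e^{2c\sqrt n-O(1)}$ rather than $e^{c\sqrt n}$; multiplying by $e^{\sqrt n/(cA)}$ there would destroy the estimate. The fix is immediate: since the product defining $E_{K_n}$ starts at $i=K_n+1$, one has $|E_{K_n}(q)|\leq\exp\bigl(\sum_{i>K_n}e^{-Ait_0}\bigr)=O(1)$ uniformly on the whole circle $|q|=e^{-t_0}$ (by the same tuning computation you already did), so $E_{K_n}$ never spoils any classical bound for $H$ and the minor arcs contribute $o(p(n))$ outright. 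With that correction, and granting the standard Hardy--Ramanujan major/minor arc estimates you invoke, the argument goes through.
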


\begin{tworemarks}
\

\noindent
(1) The limiting cumulative distribution in Theorem~\ref{MainTheorem} is of Gumbel type
\cite{Gumbel1, Gumbel2}. Such distributions are often used to study the maximum (resp. minimum) of a 
number of samples of a random variable. Letting $A:=\alpha+\beta,$ we have mean $\frac{\sqrt{6}}{A\pi}\left( \log\left(\frac{\sqrt{6}}{A\pi}\right)+\gamma \right),$ where $\gamma$ is the Euler-Mascheroni constant, and variance $1/A^2.$

\smallskip
\noindent
(2) Gillman, Gonzalez, Schoenbauer and two of the authors studied a different kind of distribution for Hilbert schemes of surfaces in \cite{GGOR}. In that work equidistribution results were obtained for the Hodge numbers organized by congruence conditions.
\end{tworemarks}

\begin{example} For example, let $\alpha=1$ and $\beta=2.$ 
For $n=20,$ we have
$$
P\left(X^{[20]}_{1,2};T\right)=202+212T^2+126T^4+56T^6+22T^8+7T^{10}+2T^{12}.
$$
This small degree polynomial  is not very suggestive. However, for $n=1000$
the renormalized even degree\footnote{The odd degree coefficients terms identically vanish.} coefficients displayed in Figure~\ref{figure2} is quite illuminating. As $P\left(X^{[1000]}_{1,2};1\right)=p(1000),$  the plot consists of the  334 points
$\left \{ \left(\frac{2m}{666}, \frac{b_{2m}(1000)}{p(1000)}\right) \ : \ 0\leq m\leq 333\right\}.$

 \smallskip
 \begin{center}
\includegraphics[height=70mm]{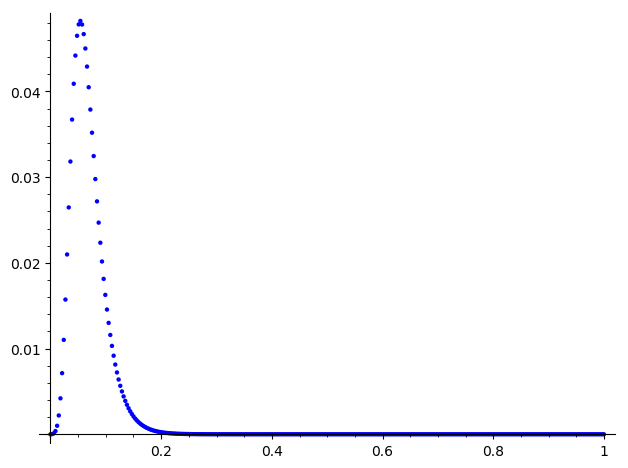}
\captionof{figure}{Betti distribution for $X^{[1000]}_{1,2}$}\label{figure2}
\end{center}
Theorem~\ref{MainTheorem} gives the cumulative distribution corresponding to such plots as $n\rightarrow +\infty.$ In this case, the theorem asserts that
$$
\lim_{n\rightarrow +\infty} \Phi_n\left (1, 2; 2\sqrt{n}x+\frac{\sqrt{6n}}{3\pi}\cdot \log(n)\right)=\exp\left (
-\frac{\sqrt{6}}{3\pi}\cdot  \exp\left( -\frac{3\pi x}{\sqrt{6}}
\right)\right).
$$
\smallskip
\end{example}

Theorem~\ref{MainTheorem} follows from a result which is of independent interest that generalizes a theorem of Erd\H{o}s and
Lehner on the distribution of the number of parts in partitions of fixed size.
Using the
celebrated Hardy-Ramanujan asymptotic formula
 $$
 p(n)\sim \frac{1}{4n\sqrt{3}}\cdot \exp(C\sqrt{n}),
 $$
 where  $C:=\pi \sqrt{2/3},$
 Erd\H{o}s and Lehner determined the distribution of the number of parts in partitions of size $n$.
 More precisely, if $k_n=k_n(x):=C^{-1} \sqrt{n} \log(n)+\sqrt{n}x,$ they proved (see Theorem 1.1 of \cite{EL}) that
 \begin{equation}\label{ELTheorem}
 \lim_{n\rightarrow +\infty} \frac{p_{\leq k_n}(n)}{p(n)}=\exp\left(-\frac{2}{C} e^{-\frac{1}{2}C x}\right),
 \end{equation}
 where $p_{\leq k}(n)$ denotes\footnote{We note that $p_{\leq k}(n)$ is denoted $p_k(n)$ in \cite{EL}.} the number of partitions of $n$ with at most $k$ parts.
 In particular, the normal order for the number of parts of a partition of size $n$ is $C^{-1} \sqrt{n} \log(n).$
 
 To prove Theorem~\ref{MainTheorem}, the generalization of the observation of Hausel and Rodriguez-Villegas, we require the distribution of the number of parts in partitions
 that are multiples of a fixed integer $A\geq 2$.   The next theorem describes these distributions.
 
 \begin{theorem}\label{ELGeneralization} If $A\geq 2$ and $p_{\leq k}(A;n)$ denotes the number of partitions of $n$ with at most $k$ parts that are multiples of $A$, then for
$ k_{A,n}=k_{A,n}(x):=\frac{1}{AC} \sqrt{n} \log(n) +x \sqrt{n},$
 we have
 $$
 \lim_{n\rightarrow +\infty} \frac{p_{\leq k_{A,n}}(A;n)}{p(n)}=\exp\left (
-\frac{2}{AC} \exp\left( -\frac 12 xAC 
\right)\right).
 $$
 \end{theorem}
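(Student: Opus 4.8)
The plan is to follow the method of Erdős and Lehner, the key simplification being that $p_{\le k}(A;n)$ has an explicit product generating function, and then to pass to the Hardy--Ramanujan saddle point. \emph{Step 1 (a generating function identity).} Every partition of $n$ decomposes uniquely into its parts divisible by $A$ --- which form $A\nu$ for a partition $\nu$ whose number of parts equals the number of parts of the original partition that are divisible by $A$ --- together with its parts not divisible by $A$. Since the generating function for partitions into at most $k$ parts is $1/(q;q)_k$ (writing $(a;q)_m:=\prod_{i=0}^{m-1}(1-aq^i)$) and that for partitions into parts not divisible by $A$ is $(q^A;q^A)_\infty/(q;q)_\infty$, one gets
$$
\sum_{n\ge 0}p_{\le k}(A;n)\,q^n=\frac{(q^A;q^A)_\infty}{(q;q)_\infty}\cdot\frac{1}{(q^A;q^A)_k}=\frac{(q^{A(k+1)};q^A)_\infty}{(q;q)_\infty}.
$$
(For $A=1$ this is just $1/(q;q)_k$, the generating function underlying \eqref{ELTheorem}.) Writing $\prod_{i>k}(1-q^{Ai})=\sum_m f_k(m)q^m$, this says $p_{\le k}(A;n)=\sum_m f_k(m)\,p(n-m)$, where $f_k(0)=1$ and $f_k(m)=0$ for $0<m<A(k+1)$.

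\emph{Step 2 (passage to the saddle).} Let $q_n:=e^{-\pi/\sqrt{6n}}$ be the saddle governing $p(n)$, and put $k=k_{A,n}(x)$. Using $p(n-m)/p(n)=q_n^{m}(1+o(1))$ uniformly for $m$ in the relevant range --- and the fact that $f_k$ is supported on $m=0$ and on $m\ge A(k+1)\asymp \sqrt n\log n$, so that the contribution of large $m$ to $\sum_m|f_k(m)|\,p(n-m)$ is negligible --- one expects
$$
\frac{p_{\le k_{A,n}}(A;n)}{p(n)}=\sum_m f_k(m)\,\frac{p(n-m)}{p(n)}\ \longrightarrow\ \prod_{i>k_{A,n}}\bigl(1-q_n^{Ai}\bigr).
$$
Rigorously this is a Tauberian statement: one can either invoke an Ingham-type theorem for the coefficients of such products, or run the Hardy--Ramanujan circle method directly on $(q^{A(k+1)};q^A)_\infty/(q;q)_\infty$, noting that near the saddle the factor $(q^{A(k+1)};q^A)_\infty$ equals $1+O\!\bigl(e^{-A(k+1)t}/(At)\bigr)=1+o(1)$, so that it perturbs the $p(n)$ asymptotics only by the multiplicative factor $\prod_{i>k}(1-q_n^{Ai})$.

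\emph{Step 3 (evaluating the product).} Since $q_n^{Ai}\to 0$ for $i\ge k_{A,n}$, we have $\sum_{i>k}\log(1-q_n^{Ai})=-\sum_{i>k}q_n^{Ai}+O\!\bigl(\sum_{i>k}q_n^{2Ai}\bigr)$. With $C=2\pi/\sqrt6$ and $k=k_{A,n}(x)=\tfrac{1}{AC}\sqrt n\log n+x\sqrt n$ a short computation gives $q_n^{Ak}=n^{-1/2}e^{-ACx/2}(1+o(1))$ and $1-q_n^{A}=\tfrac{AC}{2\sqrt n}(1+o(1))$, whence $\sum_{i>k}q_n^{Ai}=q_n^{A(k+1)}/(1-q_n^{A})\to \tfrac{2}{AC}e^{-ACx/2}$ and the error term is $O(n^{-1/2})$. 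Therefore $\prod_{i>k}(1-q_n^{Ai})\to\exp\!\bigl(-\tfrac{2}{AC}e^{-\frac12 xAC}\bigr)$, which is the assertion. The same circle-method computation performed for fixed $k$ on $q^{Ak}(q^{A(k+1)};q^A)_\infty/(q;q)_\infty$ yields the asymptotic for $p_k(A,n)$ stated in the abstract.

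\emph{Main obstacle.} The delicate point is Step 2: because $k=k_{A,n}$ grows with $n$, the ``correction factor'' $(q^{A(k+1)};q^A)_\infty$ is not a fixed function, so the circle-method (or Tauberian) error terms must be controlled uniformly in this $n$-dependent perturbation --- equivalently, one must justify replacing $p(n-m)/p(n)$ by $q_n^{m}$ inside the convolution and bounding the contribution of $m$ comparable to $n$. This is precisely the technical core of the original Erdős--Lehner argument, here carried over to the sparser family of parts divisible by $A$; the hypothesis $A\ge 2$ plays no essential role (the case $A=1$ being \eqref{ELTheorem} itself) beyond separating the new content from the classical result.
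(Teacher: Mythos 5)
Your outline is essentially correct and its final computation does land on the right constant, but it takes a genuinely different route from the paper. The paper first convolves $p_{\leq k}(j)$ (ordinary partitions into at most $k$ parts) against the $A$-regular partition function, $p_{\leq k}(A;n)=\sum_j p_{\leq k}(j)\,\preg(A;n-Aj)$, which forces it to import Hagis's asymptotic for $\preg(A;\cdot)$ and then to localize the $j$-sum near $j\approx n/A^2$ by a Laplace/Riemann-sum argument; the Erd\H{o}s--Lehner inclusion-exclusion is applied to the inner factor $p_{\leq k}(j)$. You instead exploit the single product identity $\sum_n p_{\leq k}(A;n)q^n=(q^{A(k+1)};q^A)_\infty/(q;q)_\infty$ and convolve once against $p(n-m)$, with the inclusion-exclusion hidden in the expansion of $\prod_{i>k}(1-q^{Ai})$. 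If pushed through, your route is arguably cleaner: it avoids Hagis's theorem and the Gaussian integral entirely, and the limit $\sum_{i>k}q_n^{Ai}\to \frac{2}{AC}e^{-ACx/2}$ is a one-line saddle computation. Your Step 1 and Step 3 are correct as written (and Step 1 is exactly part (2) of the paper's Theorem~\ref{PartitionAsymptotic}).

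The gap is Step 2, and it is not merely a formality. Of your two proposed rescues, the Tauberian one does not work as stated: Ingham-type theorems give asymptotics for the coefficients of a \emph{fixed} generating function, whereas here the function $(q^{A(k+1)};q^A)_\infty/(q;q)_\infty$ changes with $n$ because $k=k_{A,n}\asymp\sqrt{n}\log n$; this is precisely why the paper uses Ingham only for fixed $k$ (Theorem~\ref{PartitionAsymptotic}) and proves Theorem~\ref{ELGeneralization} by other means. The workable completion of your Step 2 is the Erd\H{o}s--Lehner mechanism itself: write $p_{\leq k}(A;n)/p(n)=\sum_{j\geq 0}(-1)^j S_j$ with $S_j:=\sum_{k<i_1<\cdots<i_j}p\bigl(n-A(i_1+\cdots+i_j)\bigr)/p(n)$, use the Bonferroni inequalities to sandwich by partial sums, prove $S_j\leq S_1^j/j!$ together with a matching lower bound, and show $S_1\to\frac{2}{AC}e^{-ACx/2}$ using uniform bounds of the form $p(n-h)/p(n)\leq e^{-\frac{C}{2}hn^{-1/2}+O(h^2n^{-3/2})}$ valid for $0\leq h\leq n$ (these also dispose of the contribution from $m$ comparable to $n$, where $p(n-m)/p(n)\neq q_n^m(1+o(1))$). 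You have correctly identified this as the technical core, but as submitted the proposal asserts the key limit rather than proving it, so the argument is a sound and attractive plan rather than a complete proof.
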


\begin{remark} The distribution functions in Theorem~\ref{ELGeneralization} are of Gumbel type with mean $\frac{2}{AC}\left( \log\left(\frac{2}{AC}\right)+\gamma \right)$ and variance $1/A^2.$
\end{remark}

\begin{example} Here we illustrate Theorem~\ref{ELGeneralization} with $A=2$ and $n=600.$
In this case we have
\begin{displaymath}
k_{2,600}(x):= \frac{\sqrt{600} \log(600)}{2C} +\sqrt{600}x.
\end{displaymath}
For real numbers $x,$ we let
$$
\delta_{k_{2,600}}(x):=\frac{\# \{ {\text {\rm $\lambda \vdash 600$ with $\leq k_{2,n}(x)$ many even parts}}\} }{p(n)}.
$$
The theorem indicates that these proportions are approximated by the Gumbel values
$$
G_{2,600}(x):=\exp \left(-\frac{1}{C}\cdot e^{-Cx}\right).
$$
The table below illustrates the strength of these approximations for various values of $x$.
\smallskip

\begin{center} 
\begin{small}
\begin{tabular}{|c|c|c|c|}
 \hline 
$x$ &$\lfloor k_{2,600}(x)\rfloor$& $\delta_{k_{2,600}}(x)$ & $G_{2, 600}(x)$ \\ \hline \hline
$-0.1$ & $28$ &  $0.597\dots$ & $0.604\dots$ \\ \hline
$\ 0.0$ &$30$ & $0.663\dots$ & $0.677\dots$ \\ \hline
$\ 0.1$ & $32$ & $0.721\dots$ & $0.739\dots$ \\ \hline
$\ 0.2$ & $35$ & $0.791\dots$ & $0.792\dots$ \\ \hline
$\ 0.3$ & $37$ & $0.830\dots$ & $0.835\dots$ \\ \hline
$\vdots$ & $\vdots$ & $\vdots$ & $\vdots$ \\ \hline
$\ 1.5$ & $67$ & $0.994\dots$ & $0.992\dots$ \\ \hline
$\ 2.0$ & $79$ & $0.998\dots$ & $0.998\dots$ \\ \hline
\end{tabular}
\end{small}
\end{center}
\end{example}
\smallskip

We note that
Theorem~\ref{ELGeneralization} does not offer the asymptotics for
 $p_k(A;n),$ the number of partitions of $n$ with exactly $k$ parts that are multiples of $A$.
 For completeness, we offer such asymptotics,
  a result which is of independent interest. 
To make this precise, we recall the $q$-Pochhammer symbol 
$$(a; q)_k:=\displaystyle\prod_{n=0}^{k-1}(1 -aq^n
).$$

\begin{theorem}\label{PartitionAsymptotic}
If $A\geq 2$ is an integer, then the following are true.

\noindent
(1) We have that $p_k(A;n)$ is the coefficient of $T^{k}q^n$  in the infinite product
$$
\frac{(q^{A};q^{A})_{\infty}}{(q;q)_\infty(Tq^{A};q^{A})_\infty}.
$$

\smallskip
\noindent
(2) For every non-negative integer $n,$ we have $p_k(A;n)=p_{\leq k}(A;n-Ak).$
Moreover, we have
\[
\frac{(q^A;q^A)_{\infty}}{(q;q)_\infty(q^A;q^A)_k}=\sum_{n\geq0}p_{\leq k}(A;n)q^n.
\]

\smallskip
\noindent
(3) For fixed $k$, as $n\rightarrow +\infty,$ we have the asymptotic formulas
\begin{displaymath}
\begin{split}
&p_{\leq k}(A;n)\sim \frac{24^{\frac k2-\frac14}n^{\frac k2-\frac34}}{\sqrt2\left(1-\frac1A\right)^{\frac k2-\frac14}k!A^{k+\frac12}(2\pi)^k}e^{2\pi\sqrt{\frac1{6}\left(1-\frac1A\right)n}},\\
&p_{k}(A;n)\sim \frac{24^{\frac k2-\frac14}(n-Ak)^{\frac k2-\frac34}}{\sqrt2\left(1-\frac1A\right)^{\frac k2-\frac14}k!A^{k+\frac12}(2\pi)^k}e^{2\pi\sqrt{\frac1{6}\left(1-\frac1A\right)(n-Ak)}}.
\end{split}
\end{displaymath}
\end{theorem}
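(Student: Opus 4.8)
The plan is to obtain parts (1) and (2) by elementary generating‑function identities and a bijection, and then to extract part (3) from the behavior of the relevant generating function near $q=1$.

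For (1), I would split each $\lambda\vdash n$ into its parts divisible by $A$ and its parts not divisible by $A$. The latter are enumerated by $\prod_{A\nmid m}(1-q^m)^{-1}=(q^A;q^A)_\infty/(q;q)_\infty$, and the former, weighted by $T$ to the number of such parts, by $\prod_{j\ge1}(1-Tq^{Aj})^{-1}=1/(Tq^A;q^A)_\infty$; their product is the asserted series. For the first claim in (2), the map subtracting $A$ from each of the $k$ parts divisible by $A$ (and deleting the resulting zeros) is a bijection from partitions of $n$ with exactly $k$ parts divisible by $A$ onto partitions of $n-Ak$ with at most $k$ parts divisible by $A$ --- its inverse adds $A$ to each existing multiple‑of‑$A$ part and then appends enough new parts equal to $A$ --- so $p_k(A;n)=p_{\le k}(A;n-Ak)$. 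The generating function for $\sum_n p_{\le k}(A;n)q^n$ arises exactly as in (1) but now with at most $k$ parts divisible by $A$, contributing $\prod_{j=1}^k(1-q^{Aj})^{-1}=1/(q^A;q^A)_k$ instead of $1/(q^A;q^A)_\infty$; equivalently it follows from (1) and Euler's identity $1/(Tq^A;q^A)_\infty=\sum_{k\ge0}T^kq^{Ak}/(q^A;q^A)_k$.

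For (3), by (2) it suffices to prove the first asymptotic and then replace $n$ by $n-Ak$. Put $F_k(q):=\sum_n p_{\le k}(A;n)q^n=\dfrac{(q^A;q^A)_\infty}{(q;q)_\infty}\cdot\dfrac{1}{(q^A;q^A)_k}$. Since $(q^A;q^A)_\infty/(q;q)_\infty=q^{-(A-1)/24}\,\eta(A\tau)/\eta(\tau)$ for $q=e^{2\pi i\tau}$, applying the modular transformation $\eta(-1/\tau)=\sqrt{-i\tau}\,\eta(\tau)$ to numerator and denominator yields, with $q=e^{-t}$ and $t\to0^+$,
\[
\frac{(q^A;q^A)_\infty}{(q;q)_\infty}\sim \frac{1}{\sqrt{A}}\exp\!\left(\frac{\pi^2}{6t}\Big(1-\frac1A\Big)\right),
\]
the suppressed factors being $e^{O(t)}$ and exponentially small corrections. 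Combined with the elementary estimate $(q^A;q^A)_k=\prod_{j=1}^k(1-e^{-Ajt})\sim A^k k!\,t^k$, this gives
\[
F_k(e^{-t})\sim \frac{1}{A^{k+1/2}k!}\;t^{-k}\exp\!\left(\frac{\pi^2}{6t}\Big(1-\frac1A\Big)\right)\qquad(t\to0^+).
\]

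Finally, $p_{\le k}(A;n)$ is non‑decreasing in $n$ (append a part equal to $1$, which is not divisible by $A$), so a Tauberian theorem of Ingham's type turns the last display into
\[
p_{\le k}(A;n)\sim \frac{1}{A^{k+1/2}k!}\cdot\frac{B^{\frac14-\frac k2}}{2\sqrt{\pi}}\;n^{\frac k2-\frac34}\,e^{2\sqrt{Bn}},\qquad B:=\frac{\pi^2}{6}\Big(1-\frac1A\Big);
\]
using $B^{\frac14-\frac k2}=\pi^{\frac12-k}6^{\frac k2-\frac14}(1-\tfrac1A)^{\frac14-\frac k2}$, $24^{\frac k2-\frac14}=2^{k-\frac12}6^{\frac k2-\frac14}$, and $2\sqrt{Bn}=2\pi\sqrt{\tfrac16(1-\tfrac1A)n}$, this is exactly the stated formula for $p_{\le k}(A;n)$, and replacing $n$ by $n-Ak$ (via part (2)) gives that for $p_k(A;n)$. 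Alternatively, one can run the single‑term circle method directly: with $q=e^{-t}$ the dominant contribution comes from a short arc about $t=0$, where a Laplace evaluation at $t_\ast=\sqrt{B/n}$ of $g(t):=B/t+nt$ (with $g(t_\ast)=2\sqrt{Bn}$ and $g''(t_\ast)=2n/t_\ast$) produces the main term, while the complementary arc is exponentially smaller. I expect parts (1), (2), the $\eta$‑transformation, and the estimate $(q^A;q^A)_k\sim A^kk!\,t^k$ to be routine; the real work --- and the main obstacle --- is the passage from the $t\to0^+$ behavior of $F_k$ to the coefficient asymptotics, i.e. either checking precisely the hypotheses and constant in the Tauberian step, or, in the circle‑method version, proving rigorously that $F_k(e^{-t+i\theta})$ is exponentially smaller than $F_k(e^{-t})$ once $\theta$ is bounded away from $0$ and then carrying the constant cleanly through the Gaussian integral.
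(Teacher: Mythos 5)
Your proposal is correct and follows essentially the same route as the paper: the same splitting of partitions into parts divisible and not divisible by $A$ for (1), the $q$-binomial/Euler identity for (2) (your direct bijection subtracting $A$ from each multiple-of-$A$ part is a nice elementary alternative to the paper's purely generating-function derivation of $p_k(A;n)=p_{\leq k}(A;n-Ak)$), and for (3) the eta-transformation asymptotic $\frac{(q^A;q^A)_\infty}{(q;q)_\infty}\sim A^{-1/2}e^{\frac{\pi^2}{6t}(1-\frac1A)}$ together with $(q^A;q^A)_k\sim A^kk!\,t^k$, monotonicity via appending a part equal to $1$, and Ingham's Tauberian theorem, with the constants matching the stated formula. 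The one step you flag but do not carry out --- verifying the hypotheses of the Tauberian theorem beyond the radial limit --- is precisely where the paper is careful: Ingham's theorem is frequently misstated, and one must also bound $f(e^{-z})\ll|z|^de^{N/|z|}$ in cones $|y|\leq\Delta x$ (condition (3) of Theorem~\ref{TaubThm}); the paper does this by quoting the corrected statement and the cone estimates for $1/(q;q)_\infty$ from \cite{BJS} and adapting Zhang's asymptotic \cite{Zhang} for $(e^{-wz};e^{-z})_\infty$ to handle the finite Pochhammer factor, so your acknowledged obstacle is real but fillable exactly as you anticipate.
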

\begin{example} Here we illustrate the convergence of the asymptotic  for $p_1(3;n).$
Theorem~\ref{PartitionAsymptotic} (3) gives
\[
p_{1}(3;n)\sim \frac{1}{6\pi(n-3)^{\frac14}}e^{\frac{2\pi\sqrt{n-3}}{3}}.
\]
For convenience, we let $p^*_1(3;n)$ denote the right hand side of this asymptotic. The table below
illustrates the convergence of the asymptotic.
\bigskip
\begin{center}

\begin{tabular}{|c|cc|cc|cc|}
\hline \rule[-3mm]{0mm}{8mm}
$n$       && $p_1(3;n)$           && $p_1^*(3;n)$  & $p_1(3;n)/p_1^*(3;n)$ & \\   \hline 
$ 200$ &&  $93125823847$ && $\approx 82738081118$ & $\approx 1.126$ & \\
$ 400$ &&  $\approx 1.718\times 10^{16}$ && $\approx 1.579\times 10^{16}$ & $\approx 1.088$ & \\
$ 600$ &&  $\approx1.928\times 10^{20}$ && $\approx 1.799\times 10^{20}$ & $\approx 1.071$ & \\
$ 800$ &&  $\approx 5.058\times 10^{23}$ && $\approx 4.764\times 10^{23}$ & $\approx 1.062$ & \\
$ 1000$ &&  $\approx 5.232\times 10^{26}$ && $\approx 4.959\times 10^{26}$ & $\approx 1.055$ & \\
\hline
\end{tabular}
\captionof{table}{Asymptotics for $p_1(3;n)$}\label{table1}
\end{center}
\medskip

\end{example}

This paper is organized as follows.
In Section~\ref{ErdosLehnerSection} we prove Theorem~\ref{ELGeneralization}, the generalization of the classical limiting distribution (\ref{ELTheorem})  of Erd\H{o}s and Lehner.
In Section~\ref{ProofMain}, we recall the work of Buryak, Feigin, and Nakajima  \cite{BuryakFeigin, IMRN}, which gives the infinite product generating functions for the Poincar\'e polynomials
$P\left(X^{[n]}_{\alpha,\beta};T\right).$  These generating functions relate the Betti numbers to the partition functions $p_{\leq k}(\cdot).$
We use these facts, combined with Theorem~\ref{ELGeneralization}, to obtain Theorem~\ref{MainTheorem}.
 Finally, in Section~\ref{Partition} we obtain Theorem~\ref{PartitionAsymptotic}, the asymptotic formulas for  the $p_{\leq k}(A;n)$ and $p_k(A;n)$  partition functions. These asymptotics follow from an application of Ingham's Tauberian theorem. 

\section*{Acknowledgements}
The authors thank Kathrin Bringmann for helpful comments on an earlier version of this paper, and for pointing out the corrected version of the statement of Ingham's Tauberian Theorem. The authors also thank Ole Warnaar for comments on an earlier version of this paper.
 The second author thanks the support of the Thomas Jefferson Fund and the NSF (DMS-1601306 and DMS-2055118), and the Kavli Institute grant NSF PHY-1748958. The third author is grateful for the support of 
 a grant from the Simons Foundation (853830, LR) and a 2021-2023 Dean's Faculty Fellowship from Vanderbilt University.  Finally, the authors thank the referee for pointing out typographical errors in the original manuscript.
 
 \section{Generalization of a theorem of Erd\H{o}s and Lehner}\label{ErdosLehnerSection}
 
 Here we prove Theorem~\ref{ELGeneralization}. To prove the theorem we combine some elementary observations about integer partitions with delicate asymptotic analysis.
 
 \subsection{Elementary considerations}
 First we begin with an elementary convolution involving the partition functions
  $p_{\leq k}(A;\cdot )$, $p_{\leq k}(\cdot ),$ and $\preg(A;n)$, the number of $A$-regular partitions of size $n.$
  Recall that a partition is $A$-regular if all of its parts are not multiples of $A$.
 
 \begin{prop}\label{Convolution}
 If $A\geq 2$ is a positive integer, then for every positive integer $n$ we have
 $$
 p_{\leq k}(A;n)=\sum_{j=0}^{\lfloor \frac{n}{A}\rfloor} p_{\leq k}(j)\cdot \preg(A; n-Aj).
 $$
 \end{prop}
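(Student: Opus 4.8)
The plan is to prove Proposition~\ref{Convolution} by a direct combinatorial decomposition of an arbitrary partition of $n$ according to its multiples-of-$A$ parts and its $A$-regular parts. Given a partition $\lambda \vdash n$, collect the parts of $\lambda$ that are divisible by $A$; say they are $A\mu_1 \geq A\mu_2 \geq \dots$, and their total contribution to $n$ is $Aj$ for some $0 \leq j \leq \lfloor n/A \rfloor$. The remaining parts of $\lambda$ form an $A$-regular partition $\nu$ of $n - Aj$. Conversely, the data of $j$, a partition of $j$ (recording the $\mu_i$), and an $A$-regular partition of $n - Aj$ determines $\lambda$ uniquely. The key observation is that $\lambda$ has at most $k$ parts that are multiples of $A$ if and only if the partition $(\mu_1, \mu_2, \dots)$ of $j$ has at most $k$ parts. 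Hence, summing over $j$, the number of $\lambda \vdash n$ with at most $k$ multiples of $A$ equals $\sum_{j=0}^{\lfloor n/A\rfloor} p_{\leq k}(j)\cdot \preg(A; n-Aj)$, which is exactly the claimed identity.

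Concretely, I would carry out the following steps. First, fix $n$ and $A$, and define the map sending $\lambda \vdash n$ to the triple $(j, \sigma, \nu)$, where $Aj$ is the sum of the $A$-divisible parts of $\lambda$, $\sigma \vdash j$ is obtained by dividing each such part by $A$, and $\nu \vdash (n-Aj)$ is the $A$-regular partition formed by the remaining parts. Second, check this is a bijection between partitions of $n$ and triples $(j,\sigma,\nu)$ with $0\le j\le \lfloor n/A\rfloor$, $\sigma\vdash j$, and $\nu$ an $A$-regular partition of $n-Aj$: injectivity and surjectivity are immediate since the two sets of parts (divisible vs.\ not divisible by $A$) are disjoint and can be recombined. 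Third, note that the number of parts of $\lambda$ divisible by $A$ equals the number of parts of $\sigma$, so the constraint ``$\le k$ multiples of $A$ in $\lambda$'' corresponds exactly to ``$\sigma$ has $\le k$ parts.'' Finally, count: the triples with the constraint number $\sum_j p_{\le k}(j)\,\preg(A;n-Aj)$, and the left side counts $p_{\le k}(A;n)$ by definition.

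Alternatively, one could phrase this via generating functions, using the factorization
$$
\sum_{n\ge 0} \left(\sum_{\lambda \vdash n,\ \le k \text{ multiples of } A} 1\right) q^n = \left(\prod_{m\ge 1}\frac{1}{1-q^{Am}}\Big|_{\le k \text{ parts}}\right)\cdot \prod_{A\nmid m}\frac{1}{1-q^m},
$$
where the first factor is $\sum_j p_{\le k}(j) q^{Aj}$ and the second is $\sum_m \preg(A;m) q^m$; extracting the coefficient of $q^n$ gives the convolution. I expect no serious obstacle here: the only point requiring a little care is the correct bookkeeping of the ``at most $k$ parts'' condition under the rescaling $A\mu_i \mapsto \mu_i$, i.e.\ verifying that divisibility by $A$ is preserved in exactly one direction and that the count of such parts is unchanged. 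Everything else is a routine disjoint-union argument on the parts of a partition.
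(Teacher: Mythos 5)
Your proof is correct and is essentially the same as the paper's: both split a partition of $n$ into its $A$-divisible parts (which, after dividing by $A$, give a partition of $j$ into at most $k$ parts) and its remaining $A$-regular parts of total size $n-Aj$. Your write-up simply spells out the bijection and the generating-function reformulation in more detail than the paper does.
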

 \begin{proof}
Every partition of $n$ with at most $k$ parts that are multiples of $A$ can be represented as the direct product of an $A$-regular partition and a partition into at most $k$ parts that are all multiples of $A$.
If the sum of these multiples of $A$ is $Aj$, then the $A$-regular partition has size $n-Aj$. Moreover, by dividing by $A$, the multiples of $A$ are represented by a partition of $j$ into at most $k$ parts.
This proves the claimed convolution.
\end{proof}  

We also require an elegant inclusion-exclusion formula due to Erd\H{o}s and Lehner \cite{EL}
for $p_{\leq k}(n).$ 

\begin{prop}\label{ELIdentity}
If $k$ and $j$ are positive integers, then
$$
p_{\leq k}(j)=\sum_{m=0}^{\infty} (-1)^m S_k(m;j),
$$
where\footnote{The $S_k(m;j)/p(j)$ are denoted $S_m$ in \cite{EL}.}
\begin{equation}\label{Smj}
S_k(m;j):=\sum_{\substack{1\leq r_1<r_2<\dots <r_m\\
T_m\leq r_1+r_2+\dots+r_m\leq j-mk}} p\left(j-\sum_{i=1}^m (k+r_i)\right)
\end{equation}
and $T_m:=m(m+1)/2.$
\end{prop}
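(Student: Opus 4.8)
The plan is to prove the identity by comparing coefficients in a short $q$-series factorization; essentially the same bookkeeping can be repackaged as a sieve, which explains the name "inclusion--exclusion''. First I would recall that, by conjugating Young diagrams, $p_{\leq k}(j)$ equals the number of partitions of $j$ all of whose parts are at most $k$, so that
$$
\sum_{j\geq 0} p_{\leq k}(j)\,q^j=\prod_{i=1}^{k}\frac{1}{1-q^i}=\frac{1}{(q;q)_k}.
$$
The algebraic input is the elementary factorization $\dfrac{1}{(q;q)_k}=\dfrac{(q^{k+1};q)_\infty}{(q;q)_\infty}$, valid as formal power series (equivalently for $|q|<1$), which comes from $(q;q)_\infty=(q;q)_k\,(q^{k+1};q)_\infty$.

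Next I would expand the numerator as a signed sum over partitions into distinct parts exceeding $k$:
$$
(q^{k+1};q)_\infty=\prod_{i\geq k+1}(1-q^i)=\sum_{m\geq 0}(-1)^m\!\!\sum_{k<s_1<\dots<s_m}\!\! q^{\,s_1+\dots+s_m},
$$
and reparametrize $s_i=k+r_i$ with $1\leq r_1<\dots<r_m$, so the exponent becomes $\sum_{i=1}^m(k+r_i)$. Multiplying by $\frac1{(q;q)_\infty}=\sum_{a\geq0}p(a)q^a$ and reading off the coefficient of $q^j$ yields exactly $\sum_{m\geq0}(-1)^m\sum_{\,1\leq r_1<\dots<r_m,\ \sum(k+r_i)\leq j}p\!\left(j-\sum_{i=1}^m(k+r_i)\right)$. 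Since $r_1<\dots<r_m$ with each $r_i\geq1$ forces $\sum r_i\geq T_m=m(m+1)/2$, and $\sum(k+r_i)\leq j$ is the same as $\sum r_i\leq j-mk$, the range $T_m\leq r_1+\dots+r_m\leq j-mk$ in \eqref{Smj} is precisely the requirement that the tuple be admissible and that $p(\cdot)$ be evaluated at a nonnegative integer. This matches \eqref{Smj} and gives $p_{\leq k}(j)=\sum_{m\geq0}(-1)^mS_k(m;j)$. I would also note that for fixed $j$ this is a finite sum, since $S_k(m;j)=0$ once $mk+T_m>j$, so no convergence question arises; and the $m=0$ term correctly contributes $p(j)$.

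If a bijective formulation is preferred, the same computation reads as an inclusion--exclusion: $S_k(m;j)$ counts pairs $(\mu,\rho)$ with $\mu$ a partition into exactly $m$ distinct parts each $>k$ and $\rho$ an arbitrary partition with $|\mu|+|\rho|=j$; equivalently, writing $\nu=\mu\cup\rho$, it counts pairs $(\mu,\nu)$ with $\nu\vdash j$ and $\mu$ a choice of one copy of each of $m$ distinct part-sizes of $\nu$ that exceed $k$. For a fixed $\nu$ having exactly $t$ distinct part-sizes larger than $k$, the signed number of such $\mu$ is $\sum_{m}(-1)^m\binom{t}{m}=(1-1)^t$, which equals $1$ if $t=0$ and $0$ otherwise; summing over $\nu\vdash j$ leaves exactly the partitions of $j$ with no part exceeding $k$, i.e.\ $p_{\leq k}(j)$.

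The main obstacle is bookkeeping rather than anything conceptual: one must check that the reindexing $s_i\mapsto k+r_i$ together with the constraints $T_m\leq r_1+\dots+r_m\leq j-mk$ in \eqref{Smj} encode precisely "admissible $m$-tuple with $p(\cdot)$ at a nonnegative argument'', and that expanding $(q^{k+1};q)_\infty$ into an alternating sum is legitimate (immediate, since only finitely many factors contribute to each coefficient). Everything else is the standard dictionary between partition generating functions and $q$-Pochhammer symbols.
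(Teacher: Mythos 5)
Your argument is correct, and both of your formulations work. The paper's own proof is the purely combinatorial one: it passes to conjugates so that $p_{\leq k}(j)$ counts partitions of $j$ with no part $\geq k+1$, observes that $S_k(m;j)$ overcounts partitions containing $m$ distinct parts exceeding $k$, and then invokes inclusion--exclusion, explicitly writing down only the first two Bonferroni inequalities $p(j)-S_k(1;j)\leq p_{\leq k}(j)\leq p(j)-S_k(1;j)+S_k(2;j)$. Your second formulation is this same argument made fully precise: identifying $S_k(m;j)$ as the number of pairs $(\mu,\nu)$ with $\mu$ a choice of $m$ distinct part-sizes of $\nu$ exceeding $k$, and evaluating the signed count over a fixed $\nu$ as $(1-1)^{t}$, is exactly the bookkeeping the paper leaves implicit, and it is arguably cleaner than stopping at $m=2$. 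Your first formulation, via the factorization $1/(q;q)_k=(q^{k+1};q)_\infty/(q;q)_\infty$ and extraction of the coefficient of $q^j$, is a genuinely different route not taken in the paper; it has the advantage that the constraints $T_m\leq r_1+\cdots+r_m\leq j-mk$ in \eqref{Smj} emerge automatically as the support of that coefficient (and that finiteness of the $m$-sum is manifest), at the mild cost of hiding the sieve interpretation that the paper emphasizes. Either version is a complete proof.
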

\begin{proof}
By definition, $p_{\leq k}(j)$ is the number of partitions of $j$ with at most $k$ parts. By considering conjugates of partitions, one can equivalently define $p_{\leq k}(j)$ as the number of partitions of $j$
with no parts $\geq k+1$.
Since the number of partitions of size $j$ that contain a part of size $k+r,$ where $r\geq 1$, equals
$p(j-(k+r)),$ we find that $S_k(1,j)$ is generally an overcount for the number of partitions of $j$ with at least one part $\geq k+1.$ Due to this overcounting, we find that
$$
p(j)-S_k(1;j)\leq p_{\leq k}(j) \leq p(j)-S_k(1;j)+S_k(2;j),
$$
which is obtained by taking into account those partitions which have at least two parts of distinct size $\geq k+1.$ The claim follows in this way by inclusion-exclusion.
\end{proof}

\subsection{Proof of Theorem~\ref{ELGeneralization}}
To prove Theorem~\ref{ELGeneralization}, we require Propositions~\ref{Convolution} and
~\ref{ELIdentity}, and the asymptotics for $\preg(A;n).$   Thanks to the identity
$$
\prod_{n=1}^{\infty}(1+q^n+q^{2n}+\dots + q^{(A-1)n}) =\prod_{n=1}^{\infty}\frac{(1-q^{An})}{(1-q^n)}=
\sum_{n=0}^{\infty} \preg(A;n)q^n,
$$
we find that $\preg(A;n)$ equals the number of partitions of $n$ where no part occurs more than $A-1$ times. Hagis \cite{Hagis} obtained asymptotics for
the number of partitions where no part is repeated more than $t$ times, and letting $t=A-1$ in Corollary 4.2 of \cite{Hagis} gives the following theorem.

\begin{theorem}\label{HagisTheorem}
If $A\geq 2,$ then we have
$$
\preg(A;n)=C_A (24n-1+A)^{-\frac{3}{4}}\exp\left(C\sqrt{\frac{A-1}{A}\left(n+\frac{A-1}{24}\right)}\right)
\left(1+O(n^{-\frac{1}{2}})\right),
$$
where $C:=\pi \sqrt{2/3}$ and $C_A:=\sqrt{12}A^{-\frac{3}{4}}(A-1)^{\frac{1}{4}},$ and the implied constant is independent of $A$.
\end{theorem}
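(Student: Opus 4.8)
\textbf{Proof proposal for Theorem~\ref{HagisTheorem}.}
This is exactly Corollary~4.2 of \cite{Hagis} with $t=A-1$, so one may simply cite it; but a self-contained proof runs as follows. The plan is to apply the Hardy--Ramanujan circle method to the eta-quotient generating function: setting $q=e^{2\pi i\tau}$,
$$
\sum_{n\ge0}\preg(A;n)q^n=\prod_{n\ge1}\frac{1-q^{An}}{1-q^n}=\frac{(q^A;q^A)_\infty}{(q;q)_\infty}=q^{-\frac{A-1}{24}}\cdot\frac{\eta(A\tau)}{\eta(\tau)}.
$$
First I would use the modular transformation $\eta(-1/\tau)=\sqrt{-i\tau}\,\eta(\tau)$ to record the behavior of this function near every root of unity $q=e^{2\pi i h/k}$. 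Near $q=1$, writing $q=e^{-z}$ with $z\to0^+$, the transformations of $(q;q)_\infty$ and $(q^A;q^A)_\infty$ give
$$
\sum_{n\ge0}\preg(A;n)e^{-nz}\ \sim\ A^{-1/2}\exp\!\left(\frac{\pi^2}{6z}\Big(1-\tfrac1A\Big)\right)\qquad(z\to0^+),
$$
while near $e^{2\pi i h/k}$ with $k\ge2$ the exponential rate is strictly smaller, the precise value depending on $\gcd(k,A)$; one then verifies that $q=1$ is the dominant singularity.

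The second step is to extract the coefficient asymptotics. To get only the leading term, one can invoke Ingham's Tauberian theorem — legitimate since $\preg(A;n)$ is nondecreasing in $n$ — applied to the display above; equivalently, a saddle-point evaluation of $\tfrac{1}{2\pi i}\int A^{-1/2}e^{\mu/z+nz}\,dz$ with $\mu=\tfrac{\pi^2}{6}\cdot\tfrac{A-1}{A}$ has saddle $z_0=\sqrt{\mu/n}$, value $2\sqrt{\mu n}=C\sqrt{\tfrac{A-1}{A}n}$ there, and Gaussian factor $\tfrac{1}{2\sqrt\pi}\mu^{1/4}n^{-3/4}$, and the constants $A^{-1/2}\cdot\tfrac{1}{2\sqrt\pi}\mu^{1/4}$ reassemble precisely into $C_A(24n)^{-3/4}$. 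To obtain the error term $O(n^{-1/2})$ stated in the theorem, I would instead run the full Farey-dissection circle method: the major arc at $q=1$ produces an $I_{3/2}$-Bessel term whose argument is of the form $\tfrac{C}{6}\sqrt{(24n-1+A)\tfrac{A-1}{A}}$ — this is where the shifted quantities $24n-1+A$ and $n+\tfrac{A-1}{24}$ enter, namely by keeping the $q^{-(A-1)/24}$ prefactor and the $\eta$-multiplier exact rather than absorbing them into the error (at the stated precision they could be replaced by $24n$ and $n$) — and the remaining arcs contribute $O$ of the main term times $n^{-1/2}$.

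The hard part will be the claim that the implied constant is \emph{independent of $A$}. One needs a bound on $\big|(q^A;q^A)_\infty/(q;q)_\infty\big|$ on the minor arcs that beats the major-arc value by a factor not depending on $A$; the trivial estimate $\big|\tfrac{1-q^{An}}{1-q^n}\big|\le\tfrac{1-|q|^{An}}{1-|q|^n}$ only gives $|F_A(q)|\le F_A(|q|)$ and not the gap, so one must track, uniformly in $k$ and $A$, how the growth rate $\tfrac{\pi^2}{6z}(1-\tfrac1A)$ at $q=1$ degrades as $q\to e^{2\pi i h/k}$ for $k\ge2$ — essentially controlling the interaction of $k$ with $\gcd(k,A)$. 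Once such a uniform gap is established, the rest of the circle-method bookkeeping is routine. (Alternatively, Meinardus's theorem applies to $\prod_n(1-q^n)^{-a_n}$ with $a_n=1$ if $A\nmid n$ and $a_n=0$ otherwise, whose Dirichlet series $(1-A^{-s})\zeta(s)$ has a simple pole at $s=1$ with residue $\tfrac{A-1}{A}$; its hypotheses are readily checked and it yields the leading asymptotic directly, though an $A$-uniform error term still requires the care described above.)
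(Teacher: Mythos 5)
Your opening sentence is precisely the paper's proof: the authors observe that $\sum_{n\ge0}\preg(A;n)q^n=\prod_{n\ge1}(1-q^{An})/(1-q^n)$ identifies $\preg(A;n)$ with the number of partitions of $n$ in which no part repeats more than $A-1$ times, and then simply cite Corollary~4.2 of \cite{Hagis} with $t=A-1$; no independent proof is given. Your supplementary circle-method sketch is consistent with this and your constants do reassemble into $C_A(24n)^{-3/4}$, but since the paper itself only cites Hagis, the remaining gaps you honestly flag (minor-arc bounds and $A$-uniformity) do not put you behind the paper's own argument.
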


\begin{proof}[Proof of Theorem~\ref{ELGeneralization}]
Thanks to Propositions~\ref{Convolution} and ~\ref{ELIdentity}, we have that
\begin{equation}\label{BigFormula}
\frac{p_{\leq k}(A;n)}{p(n)}
 =\sum_{j=0}^{\lfloor \frac{n}{A}\rfloor}
 \frac{\left(\sum_{m=0}^{\infty} (-1)^m S_k(m;j)\right) \preg(A;n-Aj)}{p(n)}.
\end{equation}
The proof follows directly from this expression by a sequence of observations involving the asymptotics for
$p(\cdot)$ and $\preg(A;\cdot),$ combined with the earlier work of Erd\H{o}s and Lehner on the sums $S_k(m;j).$
Thanks to the special choice of $k_n=k_n(x)$, this expression yields the Taylor expansion of the claimed cumulative Gumbel distribution in $x,$ as $n\rightarrow +\infty.$ In other words, these asymptotics conspire so that the dependence on $n$
vanishes in the limit.

 For convenience, we let $S^*_k(m;j):=S_k(m;j)/p(j).$
In terms of $S^{*}_k(m,j),$ 
 \eqref{BigFormula} becomes
\begin{equation}\label{BigFormula2}
\frac{p_{\leq k}(A;n)}{p(n)}
 =\sum_{j=0}^{\lfloor \frac{n}{A}\rfloor}
 \frac{\left(\sum_{m=0}^{\infty} (-1)^m  S^*_k(m;j)\right) p(j)\preg(A;n-Aj)}{p(n)}.
\end{equation}
To make use of this formula, we begin by employing  the method of Erd\H{o}s and Lehner {\it mutatis mutandis}, which we briefly
recapitulate here.
 For $k\rightarrow +\infty,$ with $j$ and $m$ fixed, Erd\H{o}s and Lehner proved
(see (2.5) of \cite{EL}) that
\begin{equation}\label{Sasymptotics}
S^*_k(m;j) =\frac{1}{m!}\left(\frac{2}{C}\sqrt{j} \exp\left(-\frac{C}{ 2\sqrt{j}}k  \right)\right)^m  +o_{j,m}(1).
\end{equation}
For every positive integer $m,$ this effectively gives
$$
S^*_k(m;j)=\frac{1}{m!}\cdot S^*_k(1;j)^m + o_{j,m}(1)\sim \frac{1}{m!}\cdot S_k^*(1;j)^m,
$$
which Erd\H{os} and Lehner show produces, as functions in $x$, the asymptotic
 \begin{equation}\label{SumSk}
\sum_{m=0}^{\infty} (-1)^m  S^*_{k_n}(m;j) =\exp(-S^*_{k_n}(1;j))\left(1+ o_{n}(1)\right).
\end{equation}
We recall the choice of $k=k_{A,n}=k_{A,n}(x)=\frac{1}{AC} \sqrt{n} \log(n) +x \sqrt{n}.$
This is the exponential which arises in the exponential of the claimed cumulative distribution.

 To make use of (\ref{SumSk}), 
 it is convenient to recenter the sum on $j$  in  (\ref{BigFormula2}) by setting $j=\frac{n}{A^2} +y.$
 As (\ref{SumSk}) only involves $S^*_{k_n}(1;j),$ it suffices to note that when $m=1,$
 \eqref{Sasymptotics} becomes
\begin{align}\label{AsymptoticsS1}
S^*_{{\color{black}k_{A,n}}}(1;j) &=\frac{2}{AC}\sqrt{n+A^2y}\cdot  \exp\left(-\frac{\log(n)}{ 2\sqrt{1+yA^2/n}}  -\frac{xAC}{ 2\sqrt{1+yA^2/n}}  \right)  +o_{n}(1).
\end{align}
As the proof relies on (\ref{BigFormula2}), we must also estimate the quotients
$$
\frac{p(j)\preg(A;n-Aj)}{p(n)}.
$$
Thanks to the Hardy-Ramanujan asymptotic for $p(n)$ and Theorem~\ref{HagisTheorem}, we have
\begin{align}\label{PratioAsymptotics}
&\nonumber\frac{p(j)\preg(A;n-Aj)}{p(n)}\\
&  =\frac{C_A}{(24n-24Aj-1+A)^{\frac{3}{4}}} \frac{n}{j}\exp{\left(C\left(\sqrt{j}-\sqrt{n}+\sqrt{\frac{A-1}{A}\left(n-Aj+\frac{A-1}{24}\right)}\right)\right)}\cdot\left(1+O_{j}(n^{-\frac{1}{2}})\right)\notag\\
& =\frac{C_A}{(24n-24n/A-24Ay-1+A)^{\frac{3}{4}}} \frac{A^2n}{n+A^2y}\notag\\
&\ \ \ \ \ \ \ \ \ \ \times \exp{\left(C\left(\sqrt{n/A^2+y}-\sqrt{n}+\sqrt{\frac{A-1}{A}\left(n-n/A-Ay+\frac{A-1}{24}\right)}\right)\right)}
\cdot\left(1+O_{y }(n^{-\frac{1}{2}})\right).
\end{align}
The last manipulation uses the change of variable for $j$. 

We will make use of (\ref{SumSk}), (\ref{AsymptoticsS1}) and (\ref{PratioAsymptotics}) to complete the proof.
To this end, we let
$j=\lfloor n/A^2\rfloor+y$ essentially as above, but now modified\footnote{We can ignore the difference between $\lfloor n/A^2\rfloor$ with $n/A^2$ as it makes no difference for our limit calculations.} so that the $y$ are integers.  We then rewrite \eqref{BigFormula2} as 
\[\frac{p_{\leq {\color{black}k_{A,n}}}(A;n)}{p(n)} =\Sigma_{1}+\Sigma_{2}+\Sigma_{3},
\]
where $\Sigma_{1}$ is the sum over $-n/A^2\leq y< -n^{3/4}\log(n)$, $\Sigma_{2}$ is the sum over $-n^{3/4}\log(n)\leq y\leq  n^{3/4}\log(n),$
 and $\Sigma_{3}$ is the sum over $n^{3/4}\log(n)\leq y\leq n(1/A-1/A^2).$ 
 We shall show that the main contribution will come from $\Sigma_2,$ and that  
$\Sigma_1$ and $\Sigma_3$ vanish as $n\rightarrow +\infty.$

To establish the vanishing of $\Sigma_1+\Sigma_3,$ we consider the case that $|y|> n^{3/4}\log(n).$ For such $y$ we have
\[
\sqrt{n/A^2+y}-\sqrt{n}+\sqrt{\frac{A-1}{A}\left(n-n/A-Ay+\frac{A-1}{24}\right)}= O_y(\sqrt{n}),
\]
where the implied constant is negative. Moreover, \eqref{AsymptoticsS1} implies that $S^*_{{\color{black}k_{A,n}}}(1;n/A^2+y)=O(\sqrt{n}),$ where the implied constant is positive. Thus, for $y$ in these ranges,  both $\frac{p(j)}{p(n)} \preg(A;n-Aj)$ and $\sum_{m=0}^{\infty} (-1)^m  S^*_{{\color{black}k_{A,n}}}(m;j)$ decay sub-exponentially, and so 
\[\lim_{n\to \infty} \Sigma_1+\Sigma_3=0.\]

We now consider $\Sigma_2$, where $|y|\leq n^{3/4}\log(n).$ In this range, \eqref{AsymptoticsS1} becomes 
\begin{align}\label{AsymptoticsS1Again}
S^*_{{\color{black}k_{A,n}}}(1;j) &=\frac{2}{AC}\sqrt{n+A^2y}\cdot \exp\left(-\frac{\log(n)}{ 2\sqrt{1+yA^2/n}}  -\frac{xAC}{ 2\sqrt{1+yA^2/n}}  \right)  +o_{n}(1)\\
 &=\frac{2}{AC} \exp\left( -\frac 12 xAC  \right) +o_{n}(1).
\end{align}
Using (\ref{SumSk}), we obtain
\begin{equation}\label{SumSk2}
\sum_{m=0}^\infty (-1)^m S^*_{{\color{black}k_{A,n}}}(m;j)=\exp\left (-\frac{2}{AC} \exp\left( -\frac 12 xAC  \right)\right)\left(1+o_{n}(1)\right).
\end{equation}
We now estimate (\ref{PratioAsymptotics}) for these $|y|\leq  n^{3/4}\log(n).$
Since we have
\begin{align*}
\sqrt{n/A^2+y}-\sqrt{n}+\sqrt{\frac{A-1}{A}\left(n-n/A-Ay+\frac{A-1}{24}\right)} &=
-\frac{A^4}{8(A-1)}y^2n^{-3/2}+O_A(y^3n^{-5/2}),
\end{align*}
the hypothesis on $y$ allows us to turn (\ref{PratioAsymptotics}) into 
\[
\frac{p(j)\preg(A;n-Aj)}{p(n)} = 
\frac{A^2 C_A}{(24n\frac{A-1}{A})^{3/4}}\times \exp{\left(-C\frac{A^4}{8(A-1)}\frac{y^2}{n^{3/2}}
\right)}
\cdot\left(1+O_{A }(n^{-\frac{1}{4}+\varepsilon})\right).
\]
Combined with \eqref{SumSk2}, and using $C_A=\sqrt{12}A^{-\frac{3}{4}}(A-1)^{\frac{1}{4}},$ we obtain
\begin{displaymath}
\begin{split}
&\lim_{n\to\infty}\Sigma_2\\
&\ \ =\lim_{n\to \infty }\sum_{|y|<n^{3/4}\log(n)} \frac{A^2}{96^{1/4}\sqrt{A-1}}\cdot \frac{1}{n^{3/4}}\cdot
 \exp\left (-\frac{CA^4}{8(A-1)}\frac{y^2}{n^{3/2}}-\frac{2}{AC} \exp\left( -\frac 12 xAC 
\right)\right)
\cdot\left(1+o_{A }(1)\right).
\end{split}
\end{displaymath}
Approximating the right hand side as a Riemann sum, we obtain
\begin{equation}
\lim_{n\rightarrow +\infty} \Sigma_2=\lim_{n\rightarrow +\infty} \frac{A^2}{96^{1/4}\sqrt{A-1}}\int_{-\log(n)}^{\log(n)}
 \exp\left (-\frac{CA^4}{8(A-1)}t^2-\frac{2}{AC} \exp\left( -\frac 12 xAC 
\right)\right) dt,
\end{equation}
where $n$ only appears in the limits of integration.
To obtain this, we have used the substitutions $t=yn^{-3/4}$ and $dt=n^{-3/4}dy,$ and employ the fact
that the widths of the subintervals defining the Riemann sums tend to 0. 
Expanding as an integral over $\R$, this expression simplifies to
\[
\exp\left (
-\frac{2}{AC} \exp\left( -\frac 12 xAC 
\right)\right).
\]
Therefore, as a function in $x,$ we have
$$
\lim_{n\rightarrow +\infty}\frac{p_{\leq k_{A,n}}(A;n)}{p(n)}= \exp\left (
-\frac{2}{AC} \exp\left( -\frac 12 xAC 
\right)\right).
$$
This completes the proof of the theorem.

\end{proof}

\section{Application to the Hilbert schemes $X_{\alpha,\beta}^{[n]}$}\label{ProofMain}
Here we recall the relevant generating functions for the Poincar\'e polynomials of the Hilbert schemes that pertain to Theorem~\ref{MainTheorem}.
For the various Hilbert schemes on $n$ points, G\"ottsche, Buryak, Feigin, and
Nakajima \cite{BuryakFeigin, IMRN, Gottsche, GottscheICM} proved infinite product generating functions
for these Poincar\'e polynomials. For Theorem~\ref{MainTheorem}, we require the following theorem.

\begin{theorem}{\text {\rm (Buryak and Feigin)}}\label{QuasiHilbertGenFcn}
If $\alpha, \beta\in \N$ are relatively prime, then we have that
$$
G_{\alpha,\beta}(T;q):=\sum_{n=0}^{\infty} P\left(X^{[n]}_{\alpha,\beta};T\right)q^n=\frac{(q^{\alpha+\beta};q^{\alpha+\beta})_{\infty}}{(q;q)_\infty(T^2q^{\alpha+\beta};q^{\alpha+\beta})_\infty.
}
$$
\end{theorem}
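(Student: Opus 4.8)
To prove Theorem~\ref{QuasiHilbertGenFcn} I would obtain the identity from the geometry of $X^{[n]}_{\alpha,\beta}$ via torus localization together with the Bialynicki--Birula decomposition, and then reduce it to a combinatorial statement that matches the coefficient of $T^{2k}q^n$ with $p_k(\alpha+\beta;n)$.

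First I would record that $X^{[n]}_{\alpha,\beta}$ is smooth and projective. Smoothness is automatic, since the fixed-point locus of a torus acting on the smooth variety $(\C^2)^{[n]}$ is smooth; projectivity follows because a $T_{\alpha,\beta}$-fixed ideal $I$ has $\C[x,y]/I$ supported on a finite $T_{\alpha,\beta}$-invariant subset of $\C^2$, and since $\alpha,\beta\geq 1$ the only $T_{\alpha,\beta}$-fixed point of $\C^2$ is the origin, so $X^{[n]}_{\alpha,\beta}$ sits inside the punctual Hilbert scheme at $0$, which is projective. Since $\gcd(\alpha,\beta)=1$, pick $p,q\in\Z$ with $\alpha q-\beta p=1$; then $\rho(\tau):=(\tau^p,\tau^q)$ and $T_{\alpha,\beta}$ generate $(\C^\times)^2$. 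The torus $\rho$ acts on $X^{[n]}_{\alpha,\beta}$, and a point fixed by both $\rho$ and $T_{\alpha,\beta}$ is $(\C^\times)^2$-fixed, hence one of the finitely many monomial ideals $I_\lambda$, $\lambda\vdash n$. The Bialynicki--Birula decomposition of the smooth projective $X^{[n]}_{\alpha,\beta}$ with respect to this action with isolated fixed points produces one affine cell $C_\lambda$ per $\lambda\vdash n$, of complex dimension $d_\lambda$ equal to the number of strictly negative $\rho$-weights on the Zariski tangent space $T_{I_\lambda}X^{[n]}_{\alpha,\beta}$. Hence $X^{[n]}_{\alpha,\beta}$ has no odd cohomology and $P(X^{[n]}_{\alpha,\beta};T)=\sum_{\lambda\vdash n}T^{2d_\lambda}$.

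To evaluate $d_\lambda$ I would invoke the classical character of the Hilbert scheme tangent space at a monomial ideal,
\[
\mathrm{ch}\,T_{I_\lambda}(\C^2)^{[n]}=\sum_{\square\in\lambda}\left(t_1^{\,a(\square)+1}t_2^{-l(\square)}+t_1^{-a(\square)}t_2^{\,l(\square)+1}\right),
\]
with $a(\square),l(\square)$ the arm and leg of $\square$. The tangent space of the fixed locus at $I_\lambda$ is the $T_{\alpha,\beta}$-invariant part, i.e.\ the span of weight vectors with $\alpha m_1+\beta m_2=0$; using $\gcd(\alpha,\beta)=1$, these are exactly the boxes with $(a(\square)+1,l(\square))=(\beta v,\alpha v)$ (from the first summand, $\rho$-weight $-v$) or with $(a(\square),l(\square)+1)=(\beta v,\alpha v)$ (from the second, $\rho$-weight $+v$), for some $v\geq 1$. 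So $d_\lambda=\#\{\square\in\lambda:\ \alpha\,(a(\square)+1)=\beta\,l(\square)\}$, i.e.\ the number of boxes of $\lambda$ whose hook length is a multiple $(\alpha+\beta)v$ of $A:=\alpha+\beta$ and whose leg is $\alpha v$. It then remains to prove the purely combinatorial identity
\[
\sum_{n\geq 0}\Big(\sum_{\lambda\vdash n}T^{2d_\lambda}\Big)q^n=\frac{(q^{A};q^{A})_\infty}{(q;q)_\infty(T^2q^{A};q^{A})_\infty},
\]
equivalently $\#\{\lambda\vdash n:\ d_\lambda=k\}=p_k(A;n)$ for all $k,n$. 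For this I would pass to the $A$-runner abacus (Maya diagram) encoding of $\lambda$: boxes of hook length divisible by $A$ correspond to single-runner bead moves, and the boxes counted by $d_\lambda$ are exactly those single-runner moves whose removed rim hook has $\alpha v$ beads strictly below its foot. Organizing these moves runner by runner and separating off the $A$-core factors the bivariate generating function as the product of $\frac{(q^{A};q^{A})_\infty}{(q;q)_\infty}=\sum_n \preg(A;n)q^n$ (partitions with no part divisible by $A$) and $\frac{1}{(T^2q^{A};q^{A})_\infty}=\sum_\mu T^{2\ell(\mu)}q^{|\mu|}$ (partitions into multiples of $A$, weighted by number of parts); extracting the coefficient of $T^{2k}q^n$ and splitting a partition of $n$ into its parts divisible by $A$ and its other parts gives $p_k(A;n)$, which is Theorem~\ref{PartitionAsymptotic}(1). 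Combining this with the cell count proves the theorem.

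I expect the combinatorial identity in the last step to be the main obstacle: $d_\lambda$ is not a standard partition statistic (in particular it is genuinely not the number of cells of the $A$-quotient of $\lambda$), so the factorization of $\sum_\lambda T^{2d_\lambda}q^{|\lambda|}$ requires a careful analysis of how the relevant rim hooks interact across the abacus runners. A secondary point is keeping track of orientations in the geometric part --- which one-parameter subgroup $\rho$ is chosen and which $\rho$-weights count as negative --- but since transposing partitions exchanges arms with legs (and $\alpha$ with $\beta$) and the target product depends only on $A=\alpha+\beta$, none of these choices affects the outcome.
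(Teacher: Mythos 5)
The paper does not actually prove this statement; it is quoted from Buryak--Feigin \cite{BuryakFeigin} (see also \cite{IMRN}), so there is no in-paper argument to compare against. Your geometric reduction is essentially the route those authors take, and it is sound: $X^{[n]}_{\alpha,\beta}$ is smooth and projective (it sits in the punctual Hilbert scheme at the origin), the auxiliary subgroup $\rho$ together with $T_{\alpha,\beta}$ generates $(\C^{\times})^2$ because $\alpha q-\beta p=1$ is a unimodular relation on cocharacters, the Bialynicki--Birula cells are indexed by $\lambda\vdash n$, and the weight computation on the arm--leg character of $T_{I_\lambda}(\C^2)^{[n]}$ correctly yields $d_\lambda=\#\{\square\in\lambda:\ \alpha(a(\square)+1)=\beta\, l(\square)\}$, these being the boxes of hook length $Av$ and leg $\alpha v$, which carry $\rho$-weight $-v$. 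Your remark that the choice of orientation is immaterial is also fine.

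The genuine gap is in the combinatorial identity $\#\{\lambda\vdash n:\ d_\lambda=k\}=p_k(\alpha+\beta;n)$, which is the real content of the theorem and which you rightly flag as the main obstacle --- but the mechanism you sketch for it would not work. ``Separating off the $A$-core'' is the core--quotient factorization of the abacus; it produces the generating function $(q^A;q^A)_\infty^{A}/(q;q)_\infty$ for $A$-cores times quotient data weighted by the \emph{total size} $\sum_i|\lambda^{(i)}|$ of the quotient, and it does not produce the factor $(q^A;q^A)_\infty/(q;q)_\infty=\sum_n\preg(A;n)q^n$, which is the generating function for $A$-regular partitions --- a different object with a different product formula. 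Moreover $d_\lambda$ does not decouple runner by runner: the leg of a box of hook length $Av$ counts beads lying on \emph{all} $A$ runners between the two relevant abacus positions, so the condition $l(\square)=\alpha v$ is not a statement about a single runner. The target factorization $\frac{(q^A;q^A)_\infty}{(q;q)_\infty}\cdot\frac{1}{(T^2q^A;q^A)_\infty}$ reflects splitting a partition into its parts divisible by $A$ and the rest (as in Proposition~\ref{Convolution}), not the core/quotient splitting, and reconciling the geometric statistic $d_\lambda$ (which depends on $\alpha$ and $\beta$ separately) with a statistic depending only on $\alpha+\beta$ is exactly where Buryak, Feigin, and Nakajima need their nontrivial bijective argument. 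As written, your proposal asserts this factorization rather than proving it, and the specific route proposed points toward the wrong product.
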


\begin{remark}
The Poincar\'e polynomials in these cases only have even degree terms (i.e. odd index
Betti numbers are zero). Moreover, letting $T=1$ in these generating functions give Euler's generating function for $p(n).$ Therefore, we directly see that
$$
p(n)=P\left(X^{[n]}_{\alpha,\beta};1\right).
$$
Of course, the proof of Theorem~\ref{QuasiHilbertGenFcn} begins with partitions of size $n$.
\end{remark}

\begin{corollary}\label{DiscreteMeasure}
Assuming the  notation and hypotheses above, if $d\mu^{[n]}_{\alpha,\beta}$ is the discrete measure for
$X^{[n]}_{\alpha,\beta}$, then 
\[
\Phi_n(\alpha, \beta; x)=\frac{1}{p(n)}\cdot \int_{-\infty}^x d\mu^{[n]}_{\alpha,\beta}=
\frac{p_{\leq \frac{x}{2}}(\alpha+\beta;n)}{p(n)}.
\]
\end{corollary}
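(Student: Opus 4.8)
The plan is to read the Betti numbers $b_j(\alpha,\beta;n)$ off the Buryak--Feigin generating function of Theorem~\ref{QuasiHilbertGenFcn} and then unwind the definition (\ref{discretemeasureequation}) of $\Phi_n$. Write $A:=\alpha+\beta$. The product
$$
G_{\alpha,\beta}(T;q)=\frac{(q^{A};q^{A})_{\infty}}{(q;q)_\infty(T^{2}q^{A};q^{A})_\infty}
$$
is obtained from the product $\frac{(q^{A};q^{A})_{\infty}}{(q;q)_\infty(Tq^{A};q^{A})_\infty}$ of Theorem~\ref{PartitionAsymptotic}(1) by the substitution $T\mapsto T^{2}$. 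Since the coefficient of $T^{k}q^{n}$ in the latter is $p_{k}(A;n)$, the coefficient of $T^{2k}q^{n}$ in $G_{\alpha,\beta}(T;q)$ is $p_{k}(A;n)$, and every odd power of $T$ occurs with coefficient $0$. Comparing with the definition (\ref{PoincareDefn}) of $P(X^{[n]}_{\alpha,\beta};T)=\sum_{j}b_{j}(\alpha,\beta;n)T^{j}$ then gives
$$
b_{2k}(\alpha,\beta;n)=p_{k}(A;n),\qquad b_{2k+1}(\alpha,\beta;n)=0\qquad(k\geq 0).
$$

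Next I would substitute these identities into (\ref{discretemeasureequation}). Because only even-index Betti numbers survive,
$$
\sum_{j\leq x}b_{j}(\alpha,\beta;n)=\sum_{k\leq x/2}b_{2k}(\alpha,\beta;n)=\sum_{0\leq k\leq \lfloor x/2\rfloor}p_{k}(A;n).
$$
Finally I would invoke the elementary identity $p_{\leq K}(A;n)=\sum_{k=0}^{K}p_{k}(A;n)$ --- a partition of $n$ has at most $K$ parts divisible by $A$ precisely when it has exactly $k$ such parts for some $0\leq k\leq K$ --- with $K=\lfloor x/2\rfloor$, to conclude that $\sum_{j\leq x}b_{j}(\alpha,\beta;n)=p_{\leq x/2}(A;n)$. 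Dividing by $p(n)$ then yields the asserted formula for $\Phi_n(\alpha,\beta;x)$.

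There is no genuine obstacle here; the only points needing care are the bookkeeping forced by the degree-doubling $T\mapsto T^{2}$ (this is what turns the argument $x$ into $x/2$) and the convention that $p_{\leq x/2}(A;n)$ means $p_{\leq \lfloor x/2\rfloor}(A;n)$ when $x$ is not an even integer. As an alternative to extracting coefficients from Theorem~\ref{PartitionAsymptotic}(1), one could instead combine the product identity in Theorem~\ref{PartitionAsymptotic}(2) with the relation $p_{k}(A;n)=p_{\leq k}(A;n-Ak)$, but the route through part (1) is the most direct.
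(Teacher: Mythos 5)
Your proposal is correct and follows essentially the same route as the paper: extract the coefficient of $T^{2k}q^n$ from the Buryak--Feigin product via Theorem~\ref{PartitionAsymptotic}(1) with $T\mapsto T^2$, identify $b_{2k}(\alpha,\beta;n)=p_k(\alpha+\beta;n)$ with odd-index Betti numbers vanishing, and sum over $k\leq x/2$. The only difference is that you spell out the summation identity $p_{\leq K}(A;n)=\sum_{k=0}^{K}p_k(A;n)$ explicitly, which the paper leaves implicit.
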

\begin{proof}
By Theorem~\ref{QuasiHilbertGenFcn}, the Poincar\'e polynomial $P\left(X^{[n]}_{\alpha,\beta};T\right)$ is the coefficient of $q^n$ of 
\[
\frac{(q^{\alpha+\beta};q^{\alpha+\beta})_{\infty}}{(q;q)_\infty(T^2q^{\alpha+\beta};q^{\alpha+\beta})_\infty}.
\]
Part (1) of Theorem~\ref{PartitionAsymptotic} applied to $A=\alpha+\beta$ gives that the coefficient of $T^{2k}$ in this expression is $p_k(\alpha+\beta;n)$ (the odd powers of $T$ do not appear in this product as it is a function of $T^2$). Therefore, (\ref{PoincareDefn}) becomes
$$
P\left(X^{[n]}_{\alpha,\beta};T\right)=\sum_{j=0}^{\lfloor \frac{n}{\alpha+\beta}\rfloor} p_j(\alpha+\beta;n) T^{2j}=
\sum_{j=0}^{2\lfloor \frac{n}{\alpha+\beta}\rfloor} \dim \left(H_j\left(X^{[n]}_{\alpha,\beta},\Q\right)\right)T^j.
$$
Thus, the sum of coefficients up to $x$, divided by $p(n)$, is
\[
\frac{1}{p(n)}\cdot \sum_{j\leq x}b_j(\alpha,\beta; n)=\frac{1}{p(n)}\cdot \sum_{j\leq x/2}p_j(\alpha+\beta;n)=\frac{p_{\leq \frac{x}{2}}(\alpha+\beta; n)}{p(n)}.
\]
This completes the proof.
\end{proof}

\begin{proof}[Proof of Theorem~\ref{MainTheorem}]
To prove Theorem~\ref{MainTheorem}, we remind the reader that
Theorem~\ref{ELGeneralization} gives the cumulative asymptotic distribution function for
 $p_{\leq k}(A;n)$ when $A\geq 2.$
Corollary~\ref{DiscreteMeasure}, with $A=\alpha+\beta,$ identifies this partition distribution with the Betti distribution for the $n$ point Hilbert schemes cut out by the $\alpha, \beta$ torus action. The theorem follows by combining these two results.
\end{proof}

 \section{Asymptotic formulae for the $p_k(A;n)$ partition functions}\label{Partition}

Here we prove Theorem~\ref{PartitionAsymptotic}. To this end, we make use of
 Ingham's Tauberian theorem \cite{Ingham}. We note that this theorem is misstated in a number of places in the literature. Condition (3) in the statement below is often omitted. The reader is referred to the discussion in \cite{BJS}. Here we use a special case\footnote{In the notation of \cite{BJS}, we let $d=\beta$, $N=\gamma$, and we let $\alpha=0$ in the case of weak monotonicity of Theorem 1.1. } of Theorem 1.1 of \cite{BJS}.
\begin{theorem}[Ingham]\label{TaubThm}
Let $f(q)=\sum_{n\geq0}a(n)q^n$
be a holomorphic function in the unit disk \\ \noindent $|q|<1$  satisfying the following conditions:

\noindent
(1) The sequence $\left\{a(n)\right\}_{n\geq0}$ is positive and weakly monotonically increasing.

\smallskip
\noindent
(2) There exist $c\in\C,$ $d\in\R,$ and $N>0,$ such that as $t\rightarrow 0^+$ we have
\[
f(e^{-t}) \sim \lambda\cdot t^{d}\cdot e^{\frac{N}{t}}.
\]
(3) For any $\Delta>0$, in the cone $|y|\leq\Delta x$ with $x>0$ and $z=x+iy$, we have, as $z\rightarrow0$
\[
f(e^{-z})\ll |z|^d\cdot e^{\frac{N}{|z|}}.
\]
\noindent
Then as $n\rightarrow +\infty$ we have
\[
a(n)\sim\frac{\lambda\cdot N^{\frac d2+\frac14}}{2\sqrt\pi \cdot n^{\frac d2+\frac34}}e^{2\sqrt{Nn}}.
\]
\end{theorem}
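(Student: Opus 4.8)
The plan is to prove Theorem~\ref{TaubThm} by the saddle-point (circle) method applied to Cauchy's coefficient formula, with conditions (2) and (3) controlling the behaviour of $f$ near the dominant singularity $q=1$ and condition (1) serving as the Tauberian input. First I would write, for any $0<r<1$,
$$a(n)=\frac{1}{2\pi i}\oint_{|q|=r}\frac{f(q)}{q^{n+1}}\,dq=\frac{1}{2\pi}\int_{-\pi}^{\pi}f\!\left(e^{-x-iy}\right)e^{n(x+iy)}\,dy,$$
where $q=e^{-x-iy}$ and $r=e^{-x}$. On the positive real axis the exponent is $N/x+nx$ by condition (2), and its minimum over $x>0$ is attained at the saddle point $x=x_n:=\sqrt{N/n}$, where it equals $2\sqrt{Nn}$; this already explains the factor $e^{2\sqrt{Nn}}$ in the conclusion, and I would fix the contour radius to be $x=x_n$.

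Next I would split the integral into a major arc $|y|\le y_0$ and a minor arc $y_0<|y|\le\pi$, with $y_0$ chosen just above the Gaussian width $x_n^{3/2}$ (say $y_0=x_n^{3/2}\log n$). On the major arc, holomorphy of $f$ together with condition (2) lets me replace $f(e^{-z})$ by its asymptotic $\lambda z^{d}e^{N/z}$, where condition (3) supplies the uniform bound needed to control the error in extending the real-axis asymptotic into the complex cone. Taylor-expanding the exponent $N/z+nz$ about $z=x_n$ shows that the linear term in $y$ cancels at the saddle and that $\operatorname{Re}(N/z+nz)=2\sqrt{Nn}-N y^{2}/x_n^{3}+\cdots$, so the major arc reduces to the Gaussian integral
$$\frac{\lambda x_n^{d}}{2\pi}\,e^{2\sqrt{Nn}}\int_{\R}e^{-\frac{N}{x_n^{3}}y^{2}}\,dy\,\bigl(1+o(1)\bigr).$$
Using $\int_{\R}e^{-ay^{2}}\,dy=\sqrt{\pi/a}$ with $a=N/x_n^{3}$ and substituting $x_n=\sqrt{N/n}$ collapses this to exactly $\frac{\lambda N^{d/2+1/4}}{2\sqrt{\pi}\,n^{d/2+3/4}}\,e^{2\sqrt{Nn}}$. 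This single step isolates the main term and simultaneously pins down every constant in the claimed asymptotic.

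The main obstacle is the minor arc, and this is precisely where condition (3) — the cone bound that is frequently and erroneously omitted — is indispensable. Applying condition (3) in cones $|y|\le\Delta x_n$ of large aperture, the bound $|f(e^{-z})|\ll|z|^{d}e^{N/|z|}$ decays like $e^{-cN y^{2}/x_n^{3}}$ throughout $y_0<|y|\lesssim x_n$, matching the Gaussian scale; and for larger $|y|$, since $|z|\ge|y|$ gives $N/|z|\le N/|y|$, the total exponent is at most $N/|y|+n x_n$. Once $|y|>\Delta x_n$ with $\Delta$ a large fixed constant, this is bounded by $\bigl(1+\tfrac1\Delta\bigr)\sqrt{Nn}$, which falls below $2\sqrt{Nn}$ by the fixed positive amount $\bigl(1-\tfrac1\Delta\bigr)\sqrt{Nn}$, so the entire minor arc is exponentially smaller than the main term. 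Condition (1) enters as the Tauberian hypothesis: positivity and weak monotonicity of $\{a(n)\}$ provide the a priori majorant $|f(e^{-x_n-iy})|\le f(e^{-x_n})$ and guarantee that the coefficient asymptotic may be read off from the integral without pathological oscillation. Assembling the major-arc main term with the negligible minor-arc bounds then yields the stated asymptotic for $a(n)$.

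The delicate quantitative point throughout is that the savings furnished by condition (3) must be strong enough to overcome the polynomial factors of order $n^{\pm 3/4}$ that separate the trivial majorant $f(e^{-x_n})e^{nx_n}\asymp n^{-d/2}e^{2\sqrt{Nn}}$ from the true main term of order $n^{-d/2-3/4}e^{2\sqrt{Nn}}$; verifying that the cone estimate beats the main term by an \emph{exponential} factor, rather than merely a polynomial one, is the technical heart of the argument and the reason condition (3) cannot be dispensed with.
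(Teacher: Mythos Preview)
The paper does not prove this theorem: it is quoted as a special case of Theorem~1.1 of \cite{BJS} (a refinement of Ingham \cite{Ingham}) and used as a black box in the proof of Theorem~\ref{PartitionAsymptotic}. There is therefore no in-paper proof against which to compare.

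That said, your saddle-point outline recovers the main term correctly, but the minor-arc argument has a genuine gap. Condition~(3) furnishes $f(e^{-z})\ll_\Delta |z|^{d}e^{N/|z|}$ only \emph{inside} the cone $|y|\le\Delta x$, with implied constant depending on $\Delta$. Your estimate for ``larger $|y|$'' tacitly invokes the same bound on the arc $\Delta x_n<|y|\le\pi$; but covering $|y|$ of order $1$ would require $\Delta\asymp 1/x_n\to\infty$, and the implied constant is not uniform in $\Delta$, so condition~(3) is silent there. On that arc the only bound available is the trivial $|f(e^{-x_n-iy})|\le f(e^{-x_n})$, which follows from \emph{positivity alone}---your stated majorant uses $a(n)\ge 0$, not monotonicity. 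Integrated over length $O(1)$ this contributes $O\bigl(n^{-d/2}e^{2\sqrt{Nn}}\bigr)$, larger than the main term by exactly the factor $n^{3/4}$ you flag in your final paragraph but never eliminate. Monotonicity is not optional: taking $a(2n)=b(n)$ for a genuine example $\{b(n)\}$ and $a(2n+1)=1$ produces a positive sequence satisfying (2) and (3) for which the conclusion fails on the odd integers, so any argument that does not use $a(n)\le a(n+1)$ in an essential way must be wrong.

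Weak monotonicity enters not merely as a hedge against ``pathological oscillation'' but as the mechanism that handles the arc outside every fixed cone. The standard route does not read off $a(n)$ directly from a single Cauchy integral; one first obtains asymptotics for a smoothed quantity (a Riesz mean of sufficiently high order, or a short average of the $a(m)$) whose extra kernel supplies enough decay away from $q=1$ that the positivity bound becomes adequate there, and then uses $a(n)\le a(n+1)$ to sandwich $a(n)$ between nearby smoothed values and recover the sharp pointwise asymptotic. Your outline becomes correct once such a smoothing-then-unsmoothing step replaces the unjustified application of condition~(3) outside fixed cones.
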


\begin{proof}[Proof of Theorem~\ref{PartitionAsymptotic}] We prove the claims one-by-one.

\smallskip
\noindent
(1) We begin by recalling the
$q$-Pochhammer symbol 
$$(a; q)_k:=\displaystyle\prod_{n=0}^{k-1}(1 -aq^n
).$$
Clearly, we have
\[
 \frac{(q^{A};q^{A})_{\infty}}{(q;q)_\infty}=\prod_{j=1}^{A-1}\frac1{(q^j;q^A)_\infty}, 
\]
which in turn gives
\[
\frac{(q^{A};q^{A})_{\infty}}{(q;q)_\infty(Tq^{A};q^{A})_\infty}=\prod_{n\not\equiv0\pmod A}\frac1{1-q^n}\quad \times\prod_{n\equiv0\pmod A}\frac{1}{1-Tq^n}.
\]
Expanding each term as a geometric series, we find that the coefficient of $T^{k}$ collects those partitions which have $k$ parts which are $0\pmod A$. 

\smallskip
\noindent
(2) We make use of the $q$-binomial theorem, which asserts that
$$
\sum_{n\geq0}\frac{(a;q)_n}{(q;q)_n}z^n=\frac{(az;q)_{\infty}}{(z;q)_{\infty}}.
$$
Hence, if we let 
 $[T^{k}]$  denote the coefficient of $T^{k}$, this theorem allows us to conclude that
\[
\frac{(q^{A};q^{A})_{\infty}}{(q;q)_\infty}[T^{k}]\left(\frac{1}{(Tq^A;q^A)_{\infty}}\right)=\frac{(q^{A};q^{A})_{\infty}}{(q;q)_\infty}[T^{k}]\left(\sum_{n\geq0}\frac{(Tq^A)^n}{(q^A;q^A)_n}\right)=\frac{q^{Ak}(q^{A};q^{A})_{\infty}}{(q;q)_\infty(q^A;q^A)_k}.
\]
Arguing as in the proof of (1), we find the claimed generating function identity
\begin{equation}\label{qseries}
\frac{(q^{A};q^{A})_{\infty}}{(q;q)_\infty(q^A;q^A)_k}=\sum_{n\geq0}p_{\leq k}(A;n)q^n.
\end{equation}
These two $q$-series identities, combined with (1), imply that $p_k(A;n)=p_{\leq k}(A;n-Ak).$

\smallskip
\noindent
(3)  To establish the desired asymptotics, we apply Theorem~\ref{TaubThm} to (\ref{qseries}),
which is facilitated by the modularity of
Dedekind's eta-function
$$
\eta(\tau):=q^{\frac{1}{24}}(q;q)_{\infty}.
$$
 This function is well-known to satisfy
 $$
\eta\left(-\frac{1}{\tau}\right)=\sqrt{\frac{\tau}{i}}\eta(\tau).
$$
As a consequence of this transformation and the $q$-expansion $\eta(\tau)=q^{\frac1{24}}+O(q^{\frac{25}{24}})$ near $\tau=i\infty$  (for example, see p. 53 of \cite{Zagier}), for $q=e^{-t}$, $t\rightarrow0^+,$ we find that
 \begin{equation}\label{eta}
\log\left(\frac1{(q;q)_{\infty}}\right)=\frac{\pi^2}{6t}-\frac12\log\left(\frac{2\pi}{t}\right)+O(t).
 \end{equation}
 Thus, letting $t\mapsto At$ and taking a difference yields 
 \begin{equation}\label{nextfactor}
 \log\left(\frac{(q^A;q^A)_{\infty}}{(q;q)_{\infty}}\right)=\frac{\pi^2}{6t}\left(1-\frac1A\right)-\frac{\log(A)}{2}+O(t).
 \end{equation}
This calculation gives the behavior in the radial limit as $t\rightarrow0^+$ of the infinite Pochhammer symbols in \eqref{qseries}. 

To satisfy condition (3) of Theorem~\ref{TaubThm}, we also need to estimate the quotient on the left hand side of \eqref{nextfactor} for the regions $|y|\leq \Delta x$. This is given directly in Section~3.1 of \cite{BJS}. Namely, they show that in these regions, one has
\[
\frac{1}{(e^{-z};e^{-z})_{\infty}}=\sqrt{\frac{z}{2\pi}}\cdot\ e^{\frac{\pi^2}{6z}}\left(1+O_{\Delta}\left(\left|e^{-\frac{4\pi^2}{z}}\right|\right)\right)
\]
and
\[
e^{-\frac{1}{z}}\leq e^{-\frac{1}{(1+\Delta^2)|z|}}.
\]
Thus, we have
\begin{align}\label{qid}
\frac{1}{(e^{-z};e^{-z})_{\infty}}=\sqrt{\frac{z}{2\pi}}\cdot\ e^{\frac{\pi^2}{6z}}\left(1+O_{\Delta}\left(e^{-\frac{4\pi^2}{(1+\Delta^2)|z|}}\right)\right).
\end{align}
Changing variables to let $z\mapsto Az$, we then find
\begin{equation}\label{InfiniteCone}
\frac{(e^{-Az};e^{-Az})_\infty}{(e^{-z};e^{-z})_{\infty}}=\sqrt A\cdot e^{-\frac{\pi^2}{6z}\left(1-\frac1A\right)}\cdot\frac{\left(1+O_{\Delta}\left(e^{-\frac{4\pi^2}{A(1+\Delta^2)|z|}}\right)\right)}{\left(1+O_{\Delta}\left(e^{-\frac{4\pi^2}{(1+\Delta^2)|z|}}\right)\right)}=\sqrt A\cdot e^{-\frac{\pi^2}{6z}\left(1-\frac1A\right)}\left(1+O_\Delta\left(e^{-\frac{4\pi^2}{A(1+\Delta^2)|z|}}\right)\right).
\end{equation}

 Now we turn to estimating the remaining factor in \eqref{qseries}, namely, $1/(q^A;q^A)_k$. 
On the line $t\rightarrow0^+$, an important result of Zhang 
(see Theorem 2 of  \cite{Zhang})  gives that for $0<t\rightarrow0$ and $w\in\C$, 
 $$
(e^{-wt};e^{-t})_{\infty}\sim \frac{\sqrt{2\pi}}{\Gamma(w)}e^{-\frac{\pi^2}{6t}-\left(w-\frac12\right)\log(t)}.
 $$
Letting $w=k+1$ and combining with (\ref{eta}), we conclude that
 \[
 \frac{1}{(q;q)_{k}}=\frac{(q^{k+1};q)_{\infty}}{(q;q)_{\infty}}
 \sim\frac{\sqrt 2\pi}{k!}e^{-\frac{\pi^2}{6\varepsilon}-(k+1/2)\log(t)+\frac{\pi^2}{6t}-\frac12\log(2\pi/t)}=\frac{t^{-k}}{k!}.
 \]
 Letting $t\mapsto At$, we have 
 \begin{equation}\label{FiniteCond2}
 \frac{1}{(q^A;q^A)_{k}}\sim\frac{1}{k!A^k}t^{-k}.
 \end{equation}
 
 Turning to estimate $1/(q^A;q^A)_k$ in the regions $|y|\leq\Delta x$,  we use the same argument in the proof of Theorem 2 of \cite{Zhang}. One merely modifies the proof by replacing $x$ with $|z|$ in Zhang's setting to obtain
 \[
 {(e^{-A(k+1)z};e^{-Az})_{\infty}}\ll\frac{\sqrt{2\pi}}{k!}e^{-\frac{\pi^2}{6|z|}-\left(k+1-\frac12\right)\log|z|},
 \]
 as $z\rightarrow 0.$
 Moreover, by combining with (\ref{qid}), we have
 \begin{equation}\label{FiniteCond3}
 \frac{1}{(e^{-Az};e^{-Az})_{k}}=\frac{(e^{-A(k+1)z};e^{-Az})_{\infty}}{(e^{-z};e^{-z})_{\infty}}\ll\frac{|z|^{-k}}{k!}.
 \end{equation}
  Then multiplying \eqref{InfiniteCone} and \eqref{FiniteCond3}, we find that 
 \begin{equation}\label{InghamShape}
 \frac{(e^{-Az};e^{-Az})_{\infty}}{(e^{-z};e^{-z})_{\infty}(e^{-Az};e^{-Az})_k}\ll \frac{\sqrt{A}}{k!}|z|^{-k}e^{\frac{\pi^2}{6|z|}\left(1-\frac{1}{A}\right)} ,
 \end{equation}
 which shows that condition (3) of Theorem~\ref{TaubThm} is satisfied. 
 
 Multiplying \eqref{nextfactor} with \eqref{FiniteCond2}, where $q:=e^{-t}$, we obtain
\[
\frac{(q^A;q^A)_{\infty}}{(q;q)_{\infty}(q^A;q^A)_{k}}\sim
\frac{1}{k!A^{k+\frac12}}t^{-k}e^{\frac{\pi^2}{6t}\left(1-\frac 1A\right)}.
\]

Moreover, the coefficients $\frac{(q^A;q^A)_{\infty}}{(q;q)_{\infty}(q^A;q^A)_{k}}$ are clearly positive as they count partitions. They are weakly increasing as there is an easy injection from the set of partitions of $n$ with at most $k$ parts which are multiples of $A$ into the set of partitions of $n+1$ which have at most $k$ parts which are multiples of $A$; simply add $1$ to the partition, which doesn't affect the number of multiples of $A$ among the parts. 

We are thus in the situation of Theorem~\ref{TaubThm}, where we interprete (\ref{InghamShape}) with
\[
\lambda=\frac{1}{k!A^{k+\frac12}},\quad
d=-k,
\quad
N=\frac{\pi^2}{6}\left(1-\frac1A\right).
\]
Plugging these into the Theorem~\ref{TaubThm} gives the desired asymptotic for $p_{\leq k}(A;n).$ The asymptotics for $p_k(A;n)$ follows from the identity $p_k(A;n)=p_{\leq k}(A;n-Ak)$ obtained in (2).

\end{proof}


\begin{thebibliography}{99}

\bibitem{BJS} K. Bringmann, C. Jennings-Shaffer, K. Mahlburg, \emph{On a Tauberian theorem of Ingham and Euler-Maclaurin summation}, Ramanujan J. (2021), https://doi.org/10.1007/s11139-020-00377-5.


\bibitem{BuryakFeigin} A. Buryak and B. Feigin, \emph{Generating series of the Poincar\'e polynomials of quasihomogeneous Hilbert schemes}, Integrable systems and representations, Proc. Math. Stat., Springer, 2013, 15--33.

\bibitem{IMRN} A. Buryak, B. Feigin, and H. Nakajima, \emph{A simple proof of the formula
for the Betti numbers of the quasihomogeneous Hilbert schemes},
Int. Math. Res. Notices {\bf 13} (2015), 4708--4715.

\bibitem{EL} P. Erd\H{o}s and J. Lehner, \emph{The distribution of the number of summands in the partitions of a positive integer}, Duke Math. J. \textbf{8} (1941), 335-345.

\bibitem{Gumbel1} E. J. Gumbel, \emph{Les valeurs extr\^emes des distributions statistiques}, Annales de L'Institut Henri Poincar\'e
\textbf{5} (1935), 115-158.

\bibitem{Gumbel2} E. J. Gumbel, \emph{The return period of flood flows}, Annals of Mathematical Statistics
\textbf{12} (1941), 163-190.

\bibitem{HRV} T. Hausel and F. Rodriguez-Villegas, \emph{Cohomology of large semiprojective hyperk\"ahler varieties}, Ast\'erisque No. 370 (2015), 113-156.


\bibitem{GGOR} N. Gillman, X. Gonzalez, K. Ono, L. Rolen, and M. Schoenbauer, \emph{From partitions to Hodge numbers of Hilbert schemes of surfaces},
Phil. Trans. Royal Soc., Series A  \textbf{378} (2020), 20180435.

 \bibitem{Gottsche}    L. G\"ottsche, \emph{Hilbert schemes of zero-dimensional subschemes of smooth varieties}
 \textbf{1572}, Springer Lect. Notes Math., 1994.
 
 \bibitem{GottscheICM} L. G\"ottsche, \emph{Hilbert schemes of points on surfaces}, ICM Proceedings, Vol. II
(Beijing, 2002), 483--494.

\bibitem{Hagis} P. Hagis, Jr., \emph{Partitions with a restriction on the multiplicity of the summands},
Trans. Amer. Math. Soc. \textbf{155} (1971), 375-384.


\bibitem{Ingham}  A. Ingham, \textit{A Tauberian theorem for partitions}. Ann. Math. \textbf{42} (1941), 1075-1090.


\bibitem{Hilbert_schemes}
H.    Nakajima, \textit{Lectures on Hilbert Schemes of Points on Surfaces}, vol 18 of \textit{University Lecture Series}. Amer. Math. Soc., Providence, RI, 1999. 

\bibitem{Zagier}
D. Zagier {\it The dilogarithm function},
In Frontiers in Number Theory, Physics and Geometry II, P. Cartier, B. Julia, P. Moussa, P. Vanhove (eds.), Springer-Verlag, Berlin-Heidelberg-New York (2006), 3--65.

\bibitem{Zhang} R. Zhang, \textit{On asymptotics of the q-exponential and q-gamma functions}, Journal of Mathematical Analysis and Applications \textbf{411} (2014),  522-529.



\end{thebibliography}
\end{document}